\documentclass[onefignum,onetabnum]{siamart171218}


\input epsf
\usepackage{epsfig}
\usepackage{color}
\usepackage{amsfonts}

\numberwithin{equation}{section}
\usepackage{algorithmic,algorithm}

\newtheorem{example}{Example}[section]

\newsiamremark{remark}{Remark}
\newsiamremark{hypothesis}{Hypothesis}
\crefname{hypothesis}{Hypothesis}{Hypotheses}
\newsiamthm{claim}{Claim}

\newcommand{\curl}{\operatorname{curl}}
\renewcommand{\div}{\operatorname{div}}

\newcommand{\diag}{\operatorname{diag}}

\newcommand{\Range}{\operatorname{Range}}

\def\b1{{\mathbf 1}}
\def\bv{{\mathbf v}}
\def\bu{{\mathbf u}}
\def\bw{{\mathbf w}}

\def\br{{\mathbf r}}

\def\be{{\mathbf e}}

\def\bq{{\mathbf q}}
\def\bx{{\mathbf x}}

\def\bbf{{\mathbf f}}


\newcommand{\A}{{\mathcal A}}

\newcommand{\T}{{\mathcal T}}

%
%




\def\bxi{{\boldsymbol \xi }}

\ifpdf
\hypersetup{
  pdftitle={Modifying AMG Coarse Spaces to Exhibit Approximation in Energy},
  pdfauthor={X.~Hu and P.~S. Vassilevski}
}
\fi




\begin{document}

\title{Modifying AMG coarse spaces with weak approximation property to exhibit approximation in energy norm\thanks{This work was performed under the auspices of the U.S. Department
of Energy by Lawrence Livermore National Laboratory under Contract
DE-AC52-07NA27344. The work of the second author was partially supported by NSF under grant DMS-1619640. The work of Hu was partially supported by NSF grant DMS-1620063.}}

\author{Xiaozhe Hu \thanks{Department of Mathematics, Tufts University, Medford, MA 02155. 
  (\email{xiaozhe.hu@tufts.edu})}
\and Panayot S. Vassilevski\thanks{Center for Applied Scientific Computing,
             Lawrence Livermore National Laboratory,
             P.O. Box 808, L-561,
             Livermore, CA 94551, U.S.A. and 
             Department of Mathematics, Portland State University, Portland, OR 97201
  (\email{panayot@llnl.gov}, \email{panayot@pdx.edu}).}
  }

\maketitle

\begin{abstract}
Algebraic multigrid (AMG) coarse spaces are commonly constructed so that they exhibit
the so-called weak approximation (WAP) property which is necessary and sufficient condition for uniform two-grid convergence. This paper studies a modification of such coarse spaces so that the modified ones provide approximation in energy norm.  Our modification is based on the projection in energy norm onto an orthogonal complement of original coarse space. This generally leads to dense modified coarse space matrices which is hence computationally infeasible. To remedy this, based on the fact that the projection involves inverse of a well-conditioned matrix, we use polynomials to approximate the projection and, therefore, obtain a practical, sparse modified coarse matrix and prove that the modified coarse space maintains computationally feasible approximation in energy norm. We present some numerical results for both,  PDE discretization matrices as well as graph Laplacian ones, which are in accordance with our theoretical results.
\end{abstract}

\begin{keywords}
AMG, weak approximation property, strong approximation property
\end{keywords}

\begin{AMS}
65F10, 65N20, 65N30
\end{AMS}

\section{Introduction}\label{section: introduction}
Algebraic multigrid is one of the most successful methods for solving large-scale sparse systems of linear equations $A \bu = \bbf$ with symmetric positive definite (SPD) matrix $A$, especially for the case when $A$ comes from finite element discretization of second order elliptic equations.  AMG has also been extended to matrices arising from much broader classes of discretized PDEs (e.g., \cite{RS-AMG-edge-elements}, the AMS and ADS solvers in \cite{AMS}, \cite{ADS}) and even for non-PDE matrices (using adaptive AMG, see e.g., \cite{AdaptiveSA, DaV13}), including ones coming from network simulations (e.g. graph Laplacian, \cite{XXX}).  For an overview of some AMG methods, we refer to~\cite{MLBFP} and more recently to~\cite{Xu-Zikatanov: Acta Numerica}.  

Another important aspect of AMG, which is the main focus of this work, is that it provides a hierarchy of coarse spaces, which are natural candidates for dimension reduction, sometimes referred to as numerical {\em upscaling}.  
There are quite a few literature on multigrid-based upscaling techniques, e.g., \cite{GGR97,MM06,MDH98}, and domain-decomposition-based upscaling approaches, e.g., \cite{LSG09,SDHNPS14}.  However, one difficulty, which needs to be overcome with such an approach, is that the traditional AMG coarse spaces can not guarantee the required approximation accuracy.  More precisely, by the construction, traditional AMG coarse spaces only guarantee to possess a so-called {\em weak approximation property} (WAP), i.e., for any vector $\bu \in \mathbb{R}^n$, there exists a vector $\bu_c$ belonging to the coarse space, such that $\| \bu - P\bu_c \|_D \leq \eta_w \| \bu \|_A$, where $\| \bu \|_A := \sqrt{\bu^T A \bu}$ is the so-called energy norm and $\| \bu \|_D := \sqrt{\bu^T D \bu}$ is the (weighted) $\ell_2$-norm induced by a proper chosen SPD matrix $D$. The WAP is known to be necessary and sufficient for the uniform convergence of the two-level AMG methods (cf., e.g., \cite{MLBFP}). However, to use the same coarse space for dimension reduction, we need that the Galerkin projection (projection with respect to energy norm $\| \cdot \|_A$) onto the coarse space exhibit some approximation property.  A sufficient condition is that the coarse spaces satisfy the so-called {\em strong approximation property} (SAP), i.e., the coarse-level solution should approximate the original (fine-level) solution with some guaranteed accuracy in energy norm.  Mathematically, the SAP means that, for any vector $\bu \in \mathbb{R}^n$, there exists a vector $\bu_c$ belonging to the coarse space, such that $ \| A \| \| \bu - P \bu_c \|^2_A \leq \eta_s \| A \bu \|^2 $. Although the AMG coarse spaces do have approximation properties (by construction, in a weighted $\ell_2$-norm), the coarse-level solution (i.e., the computationally feasible Galerkin projection) does not generally possess that, neither in (weighted) $\ell_2$-norm nor in energy norm $\| \cdot \|_A$. To the best of our knowledge, none of the existing multigrid- and domain decomposition-based upscaling techniques have the desired SAP property with provable satisfactory bound on the resulting constant $\eta_s$. 

In this paper, we address the issue that the usual AMG coarse spaces do not satisfy the SAP with provable satisfactory bound on the resulting constant $\eta_s$ and develop an approach by extending a construction originated in \cite{MPe14} to our more general AMG upscaling setting. Our main contribution, which distinguish our result from all the existing results, is that our modified coarse space satisfies the SAP with provable satisfactory bound on the resulting constant $\eta_s$, which provides computable approximation to the fine-level solution in both the energy norm and (weighted) $\ell_2$-norm. The proposed method simply modifies the AMG coarse space $\Range(P)$ ($P$ is the prolongation matrix which satisfies the WAP by construction/assumption) to $\Range((I - \pi_f)P)$ where $\pi_f$ is a projection onto the $A$-orthogonal complement of $\Range(P)$ (i.e., orthogonal complement of $\Range(P)$ with respect to the $A$ inner product $\bu^TA \bv$).  We show that such modified coarse space provides a two-level $A$-orthogonal decomposition of the original fine-level solution $\bu$ and, thereby, energy error estimate of the coarse solution. Moreover, the SAP of the coarse space can be derived based on such decomposition as well. Details of the construction of $\pi_f$ will be presented in Section~\ref{section: modified coarse space}. Because the definition of $\pi_f$ involves the inverse of a matrix (see Section~\ref{sec:A-orth-complement} for details),  such modification typically leads to dense coarse matrices which is mostly of theoretical interest. In order to design a more practical approach, we take advantage of the fact that $A$ is well-conditioned on the $A$-orthogonal complement of $\Range(P)$ (which we prove holds for $P$ satisfying the WAP) and, therefore, modifying the coarse spaces based on polynomial approximations to control the sparsity of the respective coarse matrices is feasible. That is, we are able to modify the coarse space so that both, the SAP (hence the error estimate in the energy norm) and the sparsity of the coarse matrix, are satisfied. The energy error estimate improves  when the polynomial degree increases (with the expense of increased matrix density).  We present numerical results illustrating the effectiveness of the proposed method.  We would like to point out that other computationally feasible AMG-type upscaling approaches were presented in \cite{BLV} and \cite{Kalchev et al. 2016} for problems that can be formulated in a mixed (saddle-point) form. 

The remainder of the paper is structured as follows. In Section~\ref{section: WAP}, we introduce the WAP and formulate some properties of the matrices arising from the unsmoothed aggregation AMG. It provides the motivation for the construction of the improved coarse spaces which is presented in Section~\ref{section: modified coarse space}.  The error analysis in the computationally infeasible case with exact projections is presented in that section as well.  The computationally feasible case with approximate projections, giving rise to the improved coarse space satisfying the SAP and with guaranteed approximation properties is presented in Section~\ref{section: modified coarse space with approximate projection}.   The case of elliptic problems with high contrast coefficients is briefly discussed in Section~\ref{section: some remarks}.  The numerical illustration of the presented methods for both, PDE-type matrices and graph Laplacian ones, can be found in Section~\ref{section: numerical results}.  Finally, some conclusions are drawn in the last Section~\ref{section: conclusions}.

\section{Weak approximation property in AMG}\label{section: WAP}
In this section, we recall the two-grid method and the weak approximation property that is widely used to prove the convergence of two-grid methods. We point out that the prolongation matrices $P$ constructed in various AMG methods usually satisfy the WAP (a notion intoduced already in the original AMG paper, \cite{AMG original}.

We consider a SPD matrix $A \in \mathbb{R}^{n \times n}$ and let $D$ be another SPD matrix such that,
\begin{equation}\label{scaled D}
\bv^TA \bv \le \bv^T D \bv.
\end{equation}
A typical choice of $D$ is the diagonal of $A$ with proper scaling, i.e. $D = \omega^{-1} \, \text{diag}(A)$, $\omega \in \mathbb{R}$ or the so-called ``{\em $\ell_1$-smoother}'' (cf. e.g., \cite{brezina_vassilevski:2011}). We denote the norms induced by $A$ and $D$ by $\| \cdot \|_A$ and $\| \cdot \|_D$, respectively.

\subsection{The two-grid method}
First, we briefly recall the standard two-grid method.  Assume we have a smoother $M$ such as Jacobi, Gauss-Seidel, etc., a prolongation $P$, and the coarse-grid problem $A_c = P^T A P$.  
Based on these standard components, we define the standard (symmetrized) two-grid method in Algorithm~\ref{alg:TG}.
\begin{algorithm}
	\caption{Two-grid method} \label{alg:TG}
	For a current iterate $\bu$, we perform:
	\begin{algorithmic}[1]
		\STATE{Presmoothing:} $\bu \gets \bu + M^{-1} ( \bbf - A\bu)$
		\STATE{Restriction:} $\br_c \gets P^T(\bbf - A\bu)$
		\STATE{Coarse-grid correction:} $\be_c = A_c^{-1} \br_c$
		\STATE{Prolongation:} $\bu \gets \bu + P \be_c$
		\STATE{Postsmoothing:} $\bu \gets \bu + M^{-T} ( \bbf - A\bu)$
	\end{algorithmic}
\end{algorithm}

It is well-known that the two-grid method (Algorithm~\ref{alg:TG}) leads to the composite iteration matrix $E_{TG}$ based on which we define the two-grid operator $B_{TG}$ as follow,
\begin{equation*}
I - B_{TG}^{-1}A = E_{TG} = (I - M^{-T}A) (I - P A_c^{-1} P^TA) (I - M^{-1}A).
\end{equation*}

For the convergence rate of the two-grid method, we have the following two-grid estimates which can be found in Theorem 4.3, \cite{FVZ05}.  

\begin{theorem}\label{thm:TG-XZ}
	For $B_{TG}$ and the two-grid error propagation operator $E_{TG}$, we have the sharp estimates
	\begin{equation*}
	\bv^TA \bv \le \bv^TB_{TG}\bv\leq K_{TG} \bv^TA \bv \quad \text{or equivalently} \quad \| E_{TG} \|_{A} =  \rho_{TG} := 1 - \frac{1}{K_{TG}} ,
	\end{equation*} 
	where
	\begin{equation*}
	K_{TG} = \max_{\bv} \min_{\bv_c} \frac{\| \bv - P\bv_c \|^2_{\widetilde {M}}}{\| \bv \|^2_A},
	\end{equation*}
and ${\widetilde M}:=M^T(M^T+M-A)^{-1}M$ is the symmetrized smoother (starting with $M^T$).
\end{theorem}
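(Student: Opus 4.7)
The plan is to exploit the projection structure of the coarse-grid correction. Observe first that $\pi_A := P A_c^{-1} P^T A$ is the $A$-orthogonal projection onto $\Range(P)$, since $\pi_A^2 = \pi_A$ and $A\pi_A = \pi_A^T A$. Consequently the two-grid error operator factors as
\[
E_{TG} = (I - M^{-T} A)\,(I - \pi_A)\,(I - M^{-1}A).
\]
The one-line adjoint relation $A(I - M^{-T}A) = (I - M^{-1}A)^T A$, together with the idempotency $(I - \pi_A)^2 = I - \pi_A$ and the $A$-symmetry of $I - \pi_A$, yield
\[
\bv^T A\, E_{TG}\, \bv \;=\; \|(I - \pi_A)\,(I - M^{-1}A)\,\bv\|_A^2 \;\ge\; 0.
\]
Hence $E_{TG}$ is $A$-self-adjoint and $A$-positive semidefinite, and since $B_{TG}^{-1}A = I - E_{TG}$, this immediately delivers the lower estimate $\bv^T A \bv \le \bv^T B_{TG}\bv$.

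For the upper bound, I would derive the explicit additive representation
\[
B_{TG}^{-1} \;=\; \widetilde M^{-1} \;+\; (I - M^{-T}A)\, P A_c^{-1} P^T\, (I - A M^{-1}),
\]
obtained by expanding $B_{TG}^{-1}A = I - E_{TG}$ and invoking the well-known smoother identity $\widetilde M^{-1} = M^{-1} + M^{-T} - M^{-T}A M^{-1}$, which is equivalent to the stated definition of $\widetilde M$. Writing $R := (I - M^{-T}A)P$, this has the form $B_{TG}^{-1} = \widetilde M^{-1} + R\,A_c^{-1}\,R^T$ of a parallel subspace preconditioner, so the classical Lions / additive Schwarz identity gives
\[
\bv^T B_{TG}\,\bv \;=\; \min_{\bv = \bv_f + R\bv_c}\Big(\bv_f^T\widetilde M \bv_f + \bv_c^T A_c \bv_c\Big).
\]

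The main obstacle is reconciling this identity with the clean form stated in the theorem, where the minimum is over $\|\bv - P\bv_c\|_{\widetilde M}^2$ alone. The point is that the supremum of $\bv^T B_{TG}\bv / \bv^T A \bv$ may be taken over $\bv$ that are $A$-orthogonal to $\Range(P)$ (otherwise $\bv$ can be shifted by an element of $\Range(P)$ without decreasing the ratio), and on that complement the correction $M^{-T}AP\bv_c$ can be re-absorbed into the fine component while the coarse contribution $\bv_c^T A_c \bv_c$ drops out at the optimum. Verifying this shift argument --- the technical core of the Falgout--Vassilevski--Zikatanov proof --- then identifies $\sup_\bv \bv^T B_{TG}\bv / \bv^T A \bv = K_{TG}$, which combined with the lower bound yields the sharp two-sided inequality and, equivalently, $\|E_{TG}\|_A = 1 - 1/K_{TG}$.
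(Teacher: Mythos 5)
The paper does not actually prove this theorem: it is quoted verbatim from Theorem~4.3 of the cited reference \cite{FVZ05}, so there is no in-paper argument to compare against. Judged on its own terms, your outline follows the standard route (and the right one): the lower bound via the observation that $E_{TG}=\bar E^{*}\bar E$ with $\bar E=(I-\pi_A)(I-M^{-1}A)$ and $*$ the $A$-adjoint, and the upper bound via the additive representation $B_{TG}^{-1}=\bar M^{-1}+RA_c^{-1}R^{T}$ plus the Lions minimization identity. The lower-bound half is complete and correct (modulo stating the standing assumption that $M+M^{T}-A$ is SPD, so that $B_{TG}$ is well defined and SPD).

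There are, however, two genuine problems in the second half. First, a concrete error: the smoother block in the additive representation is $\bar M$ with $\bar M^{-1}=M^{-1}+M^{-T}-M^{-T}AM^{-1}=M^{-T}(M+M^{T}-A)M^{-1}$, i.e.\ $\bar M=M(M+M^{T}-A)^{-1}M^{T}$, which is \emph{not} the $\widetilde M=M^{T}(M+M^{T}-A)^{-1}M$ appearing in the statement unless $M$ is symmetric; your claim that the two are "equivalent" conflates the two symmetrizations, and the passage from $\bar M$ to $\widetilde M$ is part of the algebra that must be done. Second, and more seriously, the step you yourself label "the technical core" --- collapsing $\min_{\bv=\bv_f+R\bv_c}\bigl(\bv_f^{T}\bar M\bv_f+\bv_c^{T}A_c\bv_c\bigr)$ to $\min_{\bv_c}\|\bv-P\bv_c\|^{2}_{\widetilde M}$ --- is not carried out, and the heuristic you offer in its place does not work as stated: the supremum of $\bv^{T}B_{TG}\bv/\bv^{T}A\bv$ is not attained on the $A$-orthogonal complement of $\Range(P)$ by any obvious shift argument (shifting $\bv$ by $P\bz$ changes numerator and denominator in ways that are not monotone, and $\bar E$ does not annihilate $\Range(P)$). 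The actual proof in \cite{FVZ05} performs the minimization over $\bv_c$ explicitly by completing the square, which is precisely where $\widetilde M$ (with the transposes in the stated order) emerges. As written, the upper bound and the sharpness claim are therefore not established.
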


\subsection{The Weak Approximation Property}
In AMG, we construct a prolongation $P \in \mathbb{R}^{n \times n_c}$ and the corresponding coarse space $\text{Range}(P)$ which exhibits the WAP. We note that the WAP is a necessary and sufficient condition for uniform two-level AMG convergence (e.g., \cite{MLBFP}) and can be stated as, for any vector $\bv \in {\mathbb R}^n$, there is a coarse vector $\bv_c \in {\mathbb R}^{n_c}$, such that
\begin{equation}\label{WAP}
\|\bv - P\bv_c\|_D \le \eta_w\;\|\bv\|_A,
\end{equation}
where $\eta_w$ is the so-called WAP constant. By requiring that the smoother is spectrally equivalent to $D$, which can be verified for standard smoothers such as Gauss-Seidel and Jacobi, we can estimate the two-grid constant $K_{TG}$ based on the WAP.  More precisely, we have $K_{TG} \leq c \eta^2_w$ where the constant $c$ here measures the spectral equivalence between $\widetilde{M}$ and $D$.  This implies that $\rho_{TG}\leq 1 - \frac{1}{c \eta_w^2}$, i.e., the corresponding two-grid method converges uniformly.

In order to have a computationally feasible approach (which will become clear later on), 
in this paper, we follow~\cite{brezina_vassilevski:2011,vassilevski:2011} and assume that $P$ is constructed based on aggregation-based approach (without smoothing).  Roughly speaking, we first form a set of aggregates $\{\A_i\}_{i=1}^{n_a}$, which is a nonoverlapping partitioning of the index set $\{1,\;2,\; \dots,\;n\}$, i.e., $\cup_{i=1}^{n_a} \A_i = \{ 1,\;2,\; \dots,\;n\}$ and $\A_i \cap \A_j = \emptyset$, if $i \neq j$.  Moreover, we denote the size of $\A_i$ by $n_{\A_i}$ which is defined by the cardinality of $\A_i$.  We then solve certain (generalized) eigenvalue problems locally to obtain the local basis $\{ \bq_{\A_i, j}^c \}_{j=1}^{n_i^c}$ for each aggregate $\A_i$.  The overall prolongation is defined as 
\begin{equation}\label{def:tent-P}
P = 
\begin{pmatrix}
P_{\A_1} & 0 & \cdots & 0 \\
0 & P_{\A_2} & \cdots &  \vdots \\
\vdots & \vdots & \ddots & \vdots\\
0 & \cdots & 0 & P_{\A_{n_a}} 
\end{pmatrix}
\ 
\text{with}
\
P_{\A_i} = (\bq_{\A_i, 1}^c,\cdots, \bq_{\A_i,n_i^c}^c),
\end{equation}
and, naturally, the coarse space is just $\text{Range}(P)$.  The WAP~\eqref{WAP} can be shown by the properties of the local eigenvalue problems.  We refer to~\cite{brezina_vassilevski:2011,vassilevski:2011} for the details. We note that such local spectral construction of $P$ \eqref{def:tent-P} dated back to \cite{AMGe spectral} and is also possible for graph Laplacian matrices (see, e.g., \cite{hu vassilevski and xu}). 

As already mentioned, a WAP of the above form is a necessary condition for uniform two-level AMG convergence, so
we assume \eqref{WAP} to hold for a block-diagonal $P$ and a diagonal $D$ (scaled as in \eqref{scaled D}).


The assumptions on $P$ and $D$ imply that the matrix $P^TDP$ is sparse, actually it is block diagonal with each diagonal block corresponding to an aggregate $\A_i$.  Hence, it is easily invertible and the projection $\pi_D = P (P^TDP)^{-1}P^TD$ is sparse, hence computationally feasible.  Taking $\bv_c = (P^TDP)^{-1}P^TD \bv$ in \eqref{WAP}, we arrive at the following estimate, which is another way to present the WAP of the coarse space using the projection $\pi_D$,
\begin{equation}\label{WAP with projection}
\|\bv - \pi_D \bv \|_D \le \eta_w\;\|\bv\|_A. 
\end{equation}

As already mentioned, the WAP plays an important role in the convergence analysis of  AMG methods.  For example, we can derive two-level convergence rate directly from the WAP.  However, in this paper, our goal is to take advantage of the WAP and modify the coarse space such that the modified one satisfies not only the WAP but also the so-called strong approximation property.  Coarse spaces that satisfy the SAP with provable satisfactory bound on the constant can provide a coarse-level solution which approximates the fine-level solution with guaranteed accuracy in energy norm,
and, therefore, are important both theoretically (e.g., in the V-cycle convergence analysis) and practically (e.g., for upscaling). 

\subsection{The $A$-Orthogonal Complement to $\Range(I-\pi_D)$} \label{sec:A-orth-complement}
Our modification of the coarse spaces (which will be presented in the next section) uses information from the orthogonal complement $\Range(I-\pi_D)$. Therefore, in this subsection, we introduce how to construct a sparse linearly independent basis of the space $\Range(I-\pi_D)$ and how to project a coarse vector onto it. 

The construction of the basis of the space $\Range(I-\pi_D)$ is, of course, not unique.  Here, we are looking for a sparse (locally supported) basis due to computational complexity considerations.  In the case of aggregation-based AMG, this can be done as follows.  On each aggregate $\A_i$, we select $n^f_i$ vectors, $\{\bq^f_{\A_i,j}\}^{n^f_i}_{j=1}$, which are orthonormal with respect to  $D_{\A_i} := \left .D \right |_{\A_i}$ and span the $D_{\A_i}$-orthogonal complement of $\Range(P_{\A_i})$.  Recall that $n_{\A_i}$ is the size of the aggregates $\A_i$ and $n_{i}^c$ be the number of columns of $P_{\A_i}$, we choose $n^f_i$ such that $n_{\A_i} = n^c_i + n^f_i$.  It is clear that the vectors $\bq^f_{\A_i,j}$ extended by zero outside $\A_i$ form a basis of $\Range(I-\pi_D)$. Introducing the matrix $P_\perp$ with the vectors $\bq^f_{\A_i,j}$ as its columns, then we have,
\begin{equation}\label{properties of P perp}
P^T_\perp D P =0 \text{ and }P^T_\perp D P_\perp = I.
\end{equation}

Exploiting the local basis of $\Range(I-\pi_D)$, we project any given vector $P\bv_c \in \text{Range}(P) $  onto the $A$-orthogonal complement of $\Range(I-\pi_D)$  by solving the following problem: 
find $\bv_f \in \Range(I-\pi_D)$, such that
\begin{equation}\label{A_f-projection}
(\bw_f)^T A \bv_f = (\bw_f)^T A P \bv_c,\quad \forall  \, \bw_f \in \Range(I-\pi_D).
\end{equation}
Since we have a sparse (computable) basis of $\Range(I-\pi_D)$ represented by $P_\perp$, we can rewrite~\eqref{A_f-projection} as the following linear system of equations,
\begin{equation}\label{matrix form of A_f-projection}
A_f {\overline \bv}_f = P^T_\perp A P \bv_c,
\end{equation}
where $A_f = P^T_\perp A P_\perp$ and $\bv_f = P_{\perp} \overline{\bv}_f$.  By solving~\eqref{matrix form of A_f-projection}, we compute the projection $\bv_f = \pi_f P \bv_c$. In fact, the matrix representation of $\pi_f$ is given by $\pi_f= P_\perp A^{-1}_f P^T_\perp A$. Note the inverse of $A_f$ is involved in the definition of $\pi_f$.

We next study the conditioning of $A_f$ with the goal to derive computationally feasible (sparse) approximations to its inverse within reasonable computational cost.  
We have the following main result.
\begin{theorem}\label{thm:Af-well-cond}
If the coarse space $\text{Range}(P)$ satisfies the WAP with constant $\eta_w$, 
then the condition number $\kappa(A_f)$ of $A_f$ satisfies $\kappa(A_f) \leq \eta_w^2$.
\end{theorem}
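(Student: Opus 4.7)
The plan is to bound the extreme eigenvalues of $A_f = P_\perp^T A P_\perp$ separately, using the two properties in \eqref{properties of P perp} together with \eqref{scaled D} and the WAP. For any $\overline{\bv}_f \in \mathbb{R}^{\dim \Range(I-\pi_D)}$, I will set $\bv = P_\perp \overline{\bv}_f \in \Range(I-\pi_D)$ and express the Rayleigh quotient of $A_f$ as $\overline{\bv}_f^T A_f \overline{\bv}_f = \bv^T A \bv$, and the Euclidean norm of $\overline{\bv}_f$ as $\overline{\bv}_f^T \overline{\bv}_f = \overline{\bv}_f^T P_\perp^T D P_\perp \overline{\bv}_f = \bv^T D \bv$ by virtue of $P_\perp^T D P_\perp = I$.

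For the upper bound, \eqref{scaled D} immediately gives $\bv^T A \bv \le \bv^T D \bv$, so $\overline{\bv}_f^T A_f \overline{\bv}_f \le \overline{\bv}_f^T \overline{\bv}_f$ and hence $\lambda_{\max}(A_f) \le 1$.

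For the lower bound — which is the substantive step — I will use that columns of $P_\perp$ are $D$-orthogonal to columns of $P$. Indeed, from $P^T_\perp D P = 0$ it follows that any $\bv = P_\perp \overline{\bv}_f$ satisfies $P^T D \bv = 0$, and consequently $\pi_D \bv = P(P^TDP)^{-1} P^T D \bv = 0$. Therefore the equivalent form \eqref{WAP with projection} of the WAP reduces to $\|\bv\|_D = \|\bv - \pi_D \bv\|_D \le \eta_w \|\bv\|_A$, i.e.\ $\bv^T D \bv \le \eta_w^2 \, \bv^T A \bv$. Translating back, this means $\overline{\bv}_f^T \overline{\bv}_f \le \eta_w^2 \, \overline{\bv}_f^T A_f \overline{\bv}_f$, so $\lambda_{\min}(A_f) \ge 1/\eta_w^2$. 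Combining the two bounds yields $\kappa(A_f) \le \eta_w^2$.

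The main (conceptually) obstacle is not computational but recognising that vectors in $\Range(P_\perp)$ are annihilated by $\pi_D$, so the WAP in the form \eqref{WAP with projection} — which is an estimate for generic $\bv$ — collapses into a pure equivalence between $\|\cdot\|_D$ and $\|\cdot\|_A$ on the subspace $\Range(I-\pi_D)$. Once that simplification is made, normalising via $P_\perp^T D P_\perp = I$ delivers the claimed spectral bounds for $A_f$ essentially for free.
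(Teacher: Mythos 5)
Your proposal is correct and follows essentially the same route as the paper: both arguments reduce the claim to the spectral equivalence $\frac{1}{\eta_w^2}\,\bv^T D\bv \le \bv^T A\bv \le \bv^T D\bv$ on $\Range(I-\pi_D)$, using \eqref{scaled D} for the upper bound and the WAP in the form \eqref{WAP with projection} for the lower bound, then transfer it to $A_f$ via the normalization $P_\perp^T D P_\perp = I$ from \eqref{properties of P perp}. Your explicit observation that $\pi_D$ annihilates $\Range(P_\perp)$ is exactly the step the paper performs implicitly by choosing $\bv := (I-\pi_D)\bv$ in \eqref{WAP with projection}.
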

\begin{proof}
Choose $\bv = \bv_f := (I-\pi_D)\bv$ in \eqref{WAP with projection} and~\eqref{scaled D}, which leads to  the following spectral equivalence relations,
\begin{equation*}
\frac{1}{\eta_w^2}\; \bv^T_f D \bv_f \le \bv^T_f A  \bv_f \le \bv^T_f D \bv_f,  \quad \forall \, \bv_f \in \Range(I-\pi_D).
\end{equation*}
Equivalently, letting $ \bv_f = P_\perp {\overline \bv}_f $, using properties~\eqref{properties of P perp}, we have
\begin{equation}\label{spectral bounds}
\frac{1}{\eta_w^2}\; {\overline \bv}^T_f {\overline \bv}_f \le {\overline \bv}^T_f A_f  {\overline \bv}_f \le {\overline \bv}^T_f {\overline \bv}_f, \quad \forall \, {\overline \bv}_f,
\end{equation}
which implies that the condition number $\kappa(A_f)$ of $A_f$, satisfies $\kappa(A_f) \le \eta^2_w$, which is the desired result.
\end{proof}

\begin{remark}
When the WAP constant $\eta_w$ is bounded, especially independent of problem size, then Theorem~\ref{thm:Af-well-cond} implies that $A_f$ is well-conditioned.
\end{remark}

Since $A_f$ is well-conditioned, $A_f^{-1}$ can be accurately approximated by a matrix polynomial $q_\nu(A_f)$ of degree $\nu$.   Therefore, to approximate the solution of $A_f \bx_f = \bbf_f$, we can use the representation 
\begin{equation*}
\bx_f = A^{-1}_f  \bbf_f =  \left [ \left ( A^{-1}_f - q_\nu(A_f) \right ) \bbf_f  + q_\nu(A_f) \bbf_f\right ].
\end{equation*}
The first term on the right hand side above can be made as small as we want by choosing $q_\nu$ appropriately. 
 More precisely,  it can be made of order  $\epsilon \ll 1$ if we choose the polynomial degree $\nu  = \mathcal{O}(\log \, \epsilon^{-1})$ (cf., e.g., \cite{demko et al}, or \cite{MLBFP}, p.~413).  We give specific examples of polynomials $q_\nu(t)$ in Section~\ref{section: modified coarse space with approximate projection}.  By dropping the first term, we get the approximation 
\begin{equation} \label{approx:poly}
\bx_f \approx q_{\nu}(A_f) \bbf_f.
\end{equation}
 An important observation is that, if $\bbf_f$ is locally supported (sparse), the above approximation can be kept reasonably sparse.  In particular, consider~\eqref{matrix form of A_f-projection}, i.e., $\bbf_f = P_{\perp}^TA P\bv_c$, and let $\bv_c$ be one of the  unit coordinate vectors, then $P \bv_c$ is a column of $P$ and has local support represented by a corresponding aggregate $\A$.   Thus, such $\bbf_f$ is locally support on $\A$ and its immediate neighbors.  In this case, the approximation $q_\nu(A_f) \bbf_f $ is supported locally. More precisely, its suport depends on the sparsity of 
$A^{\nu}_f$, hence the  diameter  of the non-zero pattern of $q_\nu(A_f) \bbf_f $ can be estimated  to be of  order $\nu$ times the size of the  neighborhood of $\A$ and, therefore, can be kept under control  when $\nu$ is kept small.

The above approximation is the main motivation for our work.  Roughly speaking, such approximation allows us to modify the original  coarse space (with the WAP) so that the modified one satisfies the SAP while keeping the sparsity of the modified prolongation under control.  In the next two sections, we first introduce the SAP result in the case of exact $A^{-1}_f$  and then present the coarse space modification based on the computationally feasible polynomial approximation.

\section{The modified coarse space exhibiting the SAP}\label{section: modified coarse space}
In this section we define the modified coarse space. The construction presented here goes back to~\cite{MPe14}. In this paper, we adopt a matrix-vector presentation and motivate the applicability of the construction in ~\cite{MPe14} to our setting of aggregation-based  AMG exploiting the well-conditionedness  of $A_f$ proven in Theorem~\ref{thm:Af-well-cond}.  Thereby, we extend the analysis in~\cite{MPe14} to our more general (algebraic) setting by showing that the modified coarse spaces satisfy the SAP with provable satisfactory bound on the resulting constant $\eta_s$. 
In the following section, we extend these results to the case of approximate inverses. 

\subsection{Modification of the Coarse Space}
We first recall the projection $\pi_f = P_\perp A^{-1}_f P^T_\perp A$ which plays an important role in the construction of the modified coarse space. We also recall the original coarse space given by $\Range(P) = \Range(\pi_D)$.  The modified coarse space of our main interest is simply $\Range((I-\pi_f) \pi_D)$, or equivalently $\Range((I-\pi_f)P)$.  Naturally, the modified prolongation matrix 
takes the form $(I - \pi_f)P$.

Next, we show that we can obtain an A-orthogonal decomposition of any given vector $\bu$ based on the modified coarse space, which in turn implies the SAP of our main interest.   
To this end, we first present some properties of the two projections $\pi_D$ and $\pi_f$ summarized in the following lemma.
\begin{lemma}\label{lem:piD-pif}
The projections $\pi_D$ and $\pi_f$ satisfy $\pi_D \pi_f = 0$.  In addition, we have  that $(I-\pi_f)\pi_D$ is also a projection.
\end{lemma}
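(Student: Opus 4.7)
The plan is to exploit the key algebraic identity $P^T D P_\perp = 0$ from \eqref{properties of P perp}, which makes $\pi_D$ a $D$-orthogonal projection whose kernel is precisely $\Range(P_\perp)$. Since $\pi_f = P_\perp A_f^{-1} P_\perp^T A$ has its range contained in $\Range(P_\perp)$, everything will reduce to a short computation.

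For the first assertion, I would just multiply out:
\[
\pi_D \pi_f \;=\; P(P^TDP)^{-1}P^T D \, P_\perp A_f^{-1} P_\perp^T A.
\]
The middle factor $P^T D P_\perp$ vanishes by \eqref{properties of P perp}, so the whole product is zero. This is really the only content of the first part.

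For the second assertion, I would verify idempotence directly by squaring. Write
\[
\bigl((I-\pi_f)\pi_D\bigr)^2 \;=\; (I-\pi_f)\,\pi_D(I-\pi_f)\,\pi_D \;=\; (I-\pi_f)\bigl(\pi_D^2 - \pi_D\pi_f\pi_D\bigr).
\]
Using that $\pi_D$ is a projection (so $\pi_D^2 = \pi_D$) together with $\pi_D\pi_f = 0$ from the first part, the bracket collapses to $\pi_D$, and we recover $(I-\pi_f)\pi_D$.

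No step here looks like a real obstacle; the whole argument rests on the single $D$-orthogonality $P^T D P_\perp = 0$ built into the construction of $P_\perp$. The only thing worth being careful about is that $\pi_D\pi_f\pi_D$ must be simplified in the correct order (the zero factor is $\pi_D\pi_f$, not $\pi_f\pi_D$), so I would emphasize that one multiplies $\pi_D$ on the \emph{left} of $\pi_f$ to apply the identity from the first part.
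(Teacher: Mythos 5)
Your proposal is correct and follows essentially the same route as the paper: both parts rest on the identity $P^T D P_\perp = 0$ from \eqref{properties of P perp}, which gives $\pi_D\pi_f=0$ directly, and the idempotence of $(I-\pi_f)\pi_D$ then follows from $\pi_D^2=\pi_D$ by the same short expansion (you merely factor $(I-\pi_f)$ to the left instead of multiplying everything out). No gaps.
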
 
\begin{proof}
$\pi_D \pi_f = 0$ can be directly verified by $\pi_D= P(P^TDP)^{-1}P^TD$ and $\pi_f = P_\perp A^{-1}_f P_{\perp}^T A$.  Together with properties~\eqref{properties of P perp}, we have 
$$
\pi_D \pi_f = P(P^TDP)^{-1} \underbrace{(P^TDP_\perp)}_{=0} A^{-1}_f P_{\perp}^T A  = 0.
$$ 
  On the other hand, using $\pi_D \pi_f = 0$ and also the fact that $\pi^2_D = \pi_D$, we have
\begin{align*}
\left ((I-\pi_f)\pi_D \right )^2 &= (\pi_D - \pi_f \pi_D) (\pi_D -\pi_f\pi_D)  \\
& \quad = \pi^2_D -\pi_f \pi^2_D  - (\pi_D \pi_f) \pi_D+ \pi_f (\pi_D \pi_f) \pi_D \\
& \quad = \pi_D -\pi_f \pi_D = (I-\pi_f)\pi_D,
\end{align*}
which implies that $(I-\pi_f)\pi_D$ is a projection.
\end{proof}

We are now ready to derive our main  two-level A-orthogonal decomposition.
\begin{theorem}\label{thm:two-level-A-orth-decomp}
For a given $\bu$, there exists a $\bv$, such that 
\begin{equation}\label{main decomposition}
\bu = (I-\pi_D)\bv  + (I-\pi_f) \pi_D\bu. 
\end{equation}
Also, the two components in the above decomposition are $A$-orthogonal.
\end{theorem}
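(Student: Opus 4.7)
The plan is to construct $\bv$ explicitly and then verify both the decomposition identity and the $A$-orthogonality by direct projection algebra, leaning on Lemma~\ref{lem:piD-pif}. The natural candidate is to set $\bv := \bu - (I-\pi_f)\pi_D\bu$, because with this choice the asserted identity $\bu = (I-\pi_D)\bv + (I-\pi_f)\pi_D\bu$ reduces to the single claim $(I-\pi_D)\bv = \bv$, i.e.\ to showing that $\bv$ already lies in $\Range(I-\pi_D)$, equivalently $\pi_D\bv = 0$.

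The first step I would carry out is verifying $\pi_D\bv=0$ directly. Expanding gives $\pi_D\bv = \pi_D\bu - \pi_D^2\bu + \pi_D\pi_f\pi_D\bu$; the first two terms cancel because $\pi_D$ is a projection, and the third vanishes thanks to $\pi_D\pi_f = 0$ from Lemma~\ref{lem:piD-pif}. This yields the decomposition~\eqref{main decomposition}. For the $A$-orthogonality, I would appeal to the fact that $\pi_f = P_\perp A_f^{-1}P_\perp^T A$ is, by construction and by~\eqref{A_f-projection}, the $A$-orthogonal projection onto $\Range(P_\perp) = \Range(I-\pi_D)$; hence $(I-\pi_f)\pi_D\bu$ lies in the $A$-orthogonal complement of $\Range(I-\pi_D)$, while $(I-\pi_D)\bv \in \Range(I-\pi_D)$, so their $A$-inner product vanishes.

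The only nontrivial ingredient above is the identity $\pi_D\pi_f = 0$, which is already supplied by Lemma~\ref{lem:piD-pif}; everything else amounts to bookkeeping, and I do not anticipate a genuine obstacle. One subtle point worth flagging is that $\pi_f$ and $\pi_D$ do not commute in the other order (we only know $\pi_D\pi_f=0$, not $\pi_f\pi_D=0$), so the naive choice $\bv=\bu$ would leave an uncancelled term $-\pi_f\pi_D\bu$; this is precisely what forces the correction $-(I-\pi_f)\pi_D\bu$ in the definition of $\bv$ above.
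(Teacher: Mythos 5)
Your proof is correct, and it takes a somewhat more constructive route than the paper's. You exhibit $\bv := \bu - (I-\pi_f)\pi_D\bu$ explicitly and reduce everything to two algebraic checks: $\pi_D\bv = 0$ (which follows from $\pi_D^2=\pi_D$ and $\pi_D\pi_f=0$, exactly as you compute) and the fact that $\pi_f = P_\perp A_f^{-1}P_\perp^T A$ is the $A$-orthogonal projection onto $\Range(P_\perp)=\Range(I-\pi_D)$, so that $(I-\pi_f)\pi_D\bu$ is $A$-orthogonal to $(I-\pi_D)\bv$. The paper instead starts from the abstract $A$-orthogonal splitting $\bu = (I-\pi_D)\bv + \bxi$ with $\bxi \in \left(\Range(I-\pi_D)\right)^{\perp_A}$, identifies that orthogonal complement as $\Range\left((I-\pi_f)\pi_D\right)$ via \eqref{A_f-projection} (the identity \eqref{range representation}), writes $\bxi=(I-\pi_f)\pi_D\bw$, and then uses $\pi_D\pi_f=0$ to replace $\bw$ by $\bu$. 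The two arguments rest on the same two ingredients ($\pi_D\pi_f=0$ from Lemma~\ref{lem:piD-pif}, and the $A$-orthogonality built into $\pi_f$), but yours is self-contained and avoids invoking the existence of the abstract decomposition, while the paper's version has the advantage of producing the range identity \eqref{range representation}, which is reused later (e.g.\ in the proof of Theorem~\ref{theorem: energy upscaling error estimate}). Your closing remark about why $\bv=\bu$ would not work (since only $\pi_D\pi_f=0$ is available, not $\pi_f\pi_D=0$) is accurate and a useful sanity check.
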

\begin{proof}
We begin with the following  $A$-orthogonal decomposition
\begin{equation}\label{TL decomposition}
\bu = (I-\pi_D)\bv  + \bxi, \text{ where } \bxi \in \left (\Range(I-\pi_D)\right )^{\perp_A}.
\end{equation}
Given a $\bv_c$, from the definition of  $\bv_f = \pi_f P\bv_c$ in~\eqref{A_f-projection}, we have
\begin{equation*}
\bw_f^TA(I - \pi_f)P\bv_c = 0, \text{ for all $\bw_f \in \text{Range}(I - \pi_D)$}.
\end{equation*}
The latter identity implies that the $A$-orthogonal complement $\left (\Range  (I-\pi_D) \right) ^{\perp_A}$ of $\text{Range}(I - \pi_D)$ satisfies the relations
\begin{equation}\label{range representation}
\left (\Range  (I-\pi_D) \right) ^{\perp_A} = \Range \left( (I-\pi_f) P \right) = \Range \left( (I-\pi_f) \pi_D \right).
\end{equation}
This means that in \eqref{TL decomposition}, 
$\bxi = (I -\pi_f) \pi_D \bw$ for some $\bw$,  hence the A-orthogonal decomposition \eqref{TL decomposition}
can be rewritten as follows,
\begin{equation*}
\bu = (I-\pi_D)\bv  + (I-\pi_f) \pi_D\bw. 
\end{equation*}
Finally, using Lemma~\ref{lem:piD-pif} we have $\pi_D \bu = \pi_D (I-\pi_f) \pi_D\bw = \pi^2_D \bw = \pi_D \bw$, which shows ~\eqref{main decomposition}.
\end{proof}

The above A-orthogonal decomposition \eqref{main decomposition} basically provides an energy stable decomposition since
\begin{equation*}
\| \bu \|_A^2 = \| (I-\pi_D)\bv \|_A^2  + \| (I-\pi_f) \pi_D\bw \|_A^2.
\end{equation*}
This is essential in multilevel analysis.  In the following subsections, we prove the SAP for the modified coarse space
$\text{Range}((I-\pi_f)P)$ and also establish our first main error estimates, all based on this decomposition.


\subsection{The Strong Approximation Property}\label{section:SAP}
In this subsection, we show that the modified coarse space $\Range((I - \pi_f)P)$ satisfies the SAP with provable satisfactory bound on the constant $\eta_s$.  To this end, for given $\bbf$, we consider the solution $\bu$ of the following linear system,
\begin{equation}\label{original problem}
A \bu = \bbf.
\end{equation}
The corresponding modified coarse problem (also known as the upscaled problem) reads
\begin{equation}\label{upscaled problem}
P^T (I-\pi_f)^T  A (I-\pi_f) P \bu_c = P^T (I-\pi_f)^T \bbf.
\end{equation}
In order to show the SAP, we are interested in estimating the error $\be = \bu - (I-\pi_f) P \bu_c$ in the energy norm $\| \cdot \|_A$, more precisely, the estimate of $\|\bu - (I-\pi_f) P \bu_c\|_A$ in terms of $\|\bbf\| = \|A \bu\|$.  The main result is formulated in the following theorem.
\begin{theorem}\label{theorem: energy upscaling error estimate}
Assume the WAP \eqref{WAP} holds. Let $\be = \bu - (I-\pi_f) P \bu_c$ be the error between the fine-level solution $\bu$ of problem
\eqref{original problem} and the upscaled (coarse) solution ${\overline \bu}_c = (I-\pi_f) P \bu_c$ of \eqref{upscaled problem}. 
Then, the following energy error estimate holds:
\begin{equation}\label{upscaling error estimate in energy}
\|\be\|_A \le \eta_w \|D^{-\frac{1}{2}}A \bu\|.
\end{equation}
\end{theorem}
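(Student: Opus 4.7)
The plan is to exploit the $A$-orthogonal decomposition from Theorem~\ref{thm:two-level-A-orth-decomp} together with a Cauchy–Schwarz argument weighted by $D^{\pm 1/2}$ and the WAP in its projection form \eqref{WAP with projection}.

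First, I would observe that the upscaled problem \eqref{upscaled problem} is exactly the Galerkin projection of \eqref{original problem} onto the modified coarse space $\Range((I-\pi_f)P)$. Hence $\ott{\bu}_c = (I-\pi_f)P\bu_c$ is the $A$-orthogonal projection of $\bu$ onto $\Range((I-\pi_f)P)$, and the error $\be = \bu - \ott{\bu}_c$ satisfies the Galerkin orthogonality
\begin{equation*}
\be^T A \ott{\bu}_c = 0, \qquad \text{hence}\qquad \|\be\|_A^2 = \be^T A \bu = \be^T \bbf.
\end{equation*}

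Next, I would apply Theorem~\ref{thm:two-level-A-orth-decomp} to write $\bu = (I-\pi_D)\bv + (I-\pi_f)\pi_D\bu$, which is an $A$-orthogonal decomposition with the second summand lying in $\Range((I-\pi_f)P)$ by \eqref{range representation}. Since $\ott{\bu}_c$ is the best $A$-approximation from this subspace, we conclude $\ott{\bu}_c = (I-\pi_f)\pi_D\bu$ and therefore $\be = (I-\pi_D)\bv$. In particular, $\be \in \Range(I-\pi_D)$, so $\pi_D \be = 0$ and $(I-\pi_D)\be = \be$.

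Now I would bound $\|\be\|_A^2 = \be^T A\bu$ by Cauchy–Schwarz with the weighting $D^{1/2}, D^{-1/2}$:
\begin{equation*}
\be^T A \bu = (D^{1/2}\be)^T (D^{-1/2} A \bu) \le \|\be\|_D \, \|D^{-1/2} A\bu\|.
\end{equation*}
Then, using $\be = (I-\pi_D)\be$ together with the WAP in the form \eqref{WAP with projection}, I obtain $\|\be\|_D = \|(I-\pi_D)\be\|_D \le \eta_w \|\be\|_A$. Substituting and dividing by $\|\be\|_A$ (the case $\be = 0$ being trivial) gives \eqref{upscaling error estimate in energy}.

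The only step that requires any real care is the identification $\ott{\bu}_c = (I-\pi_f)\pi_D\bu$: it hinges on recognizing that the Galerkin solution is an $A$-orthogonal projection and that the decomposition \eqref{main decomposition} from Theorem~\ref{thm:two-level-A-orth-decomp} is itself $A$-orthogonal, so the two summands must coincide with the projection and its complement. Once this is secured, the remainder is a straightforward weighted Cauchy–Schwarz estimate combined with the WAP.
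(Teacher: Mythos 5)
Your proposal is correct and follows essentially the same route as the paper's own proof: Galerkin orthogonality plus the $A$-orthogonal decomposition of Theorem~\ref{thm:two-level-A-orth-decomp} to conclude $\be=(I-\pi_D)\bv$, then the weighted Cauchy--Schwarz bound $\be^TA\bu\le\|\be\|_D\|D^{-1/2}A\bu\|$ and the WAP \eqref{WAP with projection} applied to $\be$ itself. The only cosmetic difference is that you explicitly identify $\ott{\bu}_c=(I-\pi_f)\pi_D\bu$, whereas the paper only uses the weaker fact that $\be$ lies in $\Range(I-\pi_D)$; both are justified by \eqref{range representation}.
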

\begin{proof}
By the property of the Galerkin projection, we have that $\be =\bu - {\overline \bu}_c$ is $A$-orthogonal to~$\Range((I-\pi_f)P) = \Range((I-\pi_f)\pi_D) = \left (\Range(I-\pi_D)\right )^{\perp_A}$. Therefore, using the decomposition \eqref{main decomposition}, we have 
\begin{equation} \label{def:e}
\be = \bu - {\overline \bu}_c = (I- \pi_D) \bv, \, \text{ for some }\bv.
\end{equation}
Since ${\overline \bu}_c \in \Range((I-\pi_f)P)  =  \left (\Range(I-\pi_D)\right )^{\perp_A} $, we also have
\begin{equation*}
\|\be\|^2_A =  (\bu - {\overline \bu}_c)^TA (I-\pi_D)\bv = (A \bu)^T (I-\pi_D) \bv \le \| D^{-\frac{1}{2}}\bbf \| \|(I-\pi_D)\bv\|_D.
\end{equation*}
Using the weak approximation property \eqref{WAP with projection} for $\bv:= (I-\pi_D)\bv = \be$, we then obtain
\begin{equation*}
\|\be\|^2_A \le \|D^{-\frac{1}{2}} \bbf\| \eta_w \|(I-\pi_D)\bv\|_A = \eta_w \|D^{-\frac{1}{2}} \bbf\| \|\be\|_A,
\end{equation*}
which implies~\eqref{upscaling error estimate in energy}.
\end{proof}

From the energy error estimate~\eqref{upscaling error estimate in energy}, assuming that $D$ is well-conditioned, we have the following corollary also known as \emph{strong approximation property}.  

\begin{corollary}[Strong Approximation Property] \label{coro:SAP}
We have the following estimate
\begin{equation} \label{ine:SAP}
\|A\| \|\bu - {\overline \bu}_c\|^2_A \le  \|D\| \|\bu - {\overline \bu}_c\|^2_A \le \eta_s\; \|A \bu\|^2,
\end{equation}
where $\eta_s \le \|D\|\|D^{-1}\| \eta^2_w$, which is referred to as the SAP constant.  If $D$ is well-conditioned, then $\eta_s$ is bounded from above by a constant.
\end{corollary}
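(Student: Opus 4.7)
The plan is to derive the corollary as an essentially direct consequence of Theorem~\ref{theorem: energy upscaling error estimate} combined with the scaling assumption \eqref{scaled D} and two elementary spectral-norm inequalities. Since Theorem~\ref{theorem: energy upscaling error estimate} already gives the key bound $\|\be\|_A \le \eta_w \|D^{-1/2} A \bu\|$, the remaining work is just to convert the $D^{-1/2}$-weighted norm into the standard $\ell_2$-norm on the right-hand side, and to clean up the left-hand side.

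For the left inequality in \eqref{ine:SAP}, the plan is to note that the assumption $\bv^T A \bv \le \bv^T D \bv$ from \eqref{scaled D} implies, by taking the supremum over unit $\bv$, that $\|A\| \le \|D\|$. Multiplying $\|\be\|^2_A$ by this scalar inequality immediately yields $\|A\|\,\|\be\|^2_A \le \|D\|\,\|\be\|^2_A$.

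For the right inequality, I would square the bound of Theorem~\ref{theorem: energy upscaling error estimate} to get $\|\be\|^2_A \le \eta_w^2 (A\bu)^T D^{-1} (A\bu)$, then bound the quadratic form by the operator norm: $(A\bu)^T D^{-1}(A\bu) \le \|D^{-1}\|\,\|A\bu\|^2$. Multiplying through by $\|D\|$ gives $\|D\|\,\|\be\|^2_A \le \|D\|\,\|D^{-1}\|\,\eta_w^2 \|A\bu\|^2$, so one can take $\eta_s := \|D\|\,\|D^{-1}\|\,\eta_w^2 = \kappa(D)\,\eta_w^2$.

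Finally, the concluding statement is immediate: if $D$ is chosen so that $\kappa(D) = \|D\|\|D^{-1}\|$ is bounded independently of problem size (for instance, $D = \omega^{-1}\,\mathrm{diag}(A)$ in the quasi-uniform setting, or the $\ell_1$-smoother in more general situations), then $\eta_s$ is bounded by a constant, provided the WAP constant $\eta_w$ is. There is no real obstacle here; the only mild subtlety is being explicit that the $\|A\|$ on the far left uses the scaling \eqref{scaled D} rather than an independent assumption, and that the constant $\eta_s$ inherits its size entirely from the conditioning of $D$ and the WAP constant.
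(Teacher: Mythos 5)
Your proposal is correct and matches the paper's (implicit) derivation: the paper states the corollary as a direct consequence of Theorem~\ref{theorem: energy upscaling error estimate}, obtained exactly as you do by squaring the energy estimate, bounding $(A\bu)^T D^{-1}(A\bu) \le \|D^{-1}\|\,\|A\bu\|^2$, multiplying by $\|D\|$, and using \eqref{scaled D} to get $\|A\| \le \|D\|$ for the leftmost inequality. No gaps.
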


As we have shown, the modified coarse space $\Range (I - \pi_f) P$ satisfies the SAP with provable satisfactory bound on the constant $\eta_s$.  However, we want to point out that, the practical usage of this modified coarse space is limited since $\pi_f$ involves $A_f^{-1}$ which is dense in general.  In Section~\ref{section: modified coarse space with approximate projection}, we discuss how to use the polynomial approximation~\eqref{approx:poly} to modify the coarse space which can be used in practice with the SAP approximately satisfied.  

\subsection{A Weighted $\ell_2$--Error Estimate}\label{section: weighted ell-2 error estimate}
The estimate~\eqref{upscaling error estimate in energy} allows us to prove an $\ell_2$--error estimate, which is a direct application of the Aubin-Nitsche argument.  Let $\be = \bu - {\overline \bu}_c$ be the error and consider the following linear system,
\begin{equation*}
A \bw = D \be.
\end{equation*}
We have,
\begin{equation*}
\|\be\|^2_D = \be^T (D \be) = \be^T A \bw.
\end{equation*}
Since $\be$ is $A$-orthogonal to the modified coarse space $\Range(I-\pi_f)\pi_D$, we have, for ${\overline \bw}_c = (I-\pi_f)\pi_D \bw$ ,
\begin{equation*}
\|\be\|^2_D = \be^T A (\bw -{\overline \bw}_c)  \le \|\be\|_A \|\bw- {\overline \bw}_c\|_A.
\end{equation*}
Applying estimate~\eqref{upscaling error estimate in energy} to the error $\be_\bw := \bw-{\overline \bw}_c$ leads to
\begin{equation*}
\|\be\|^2_D  \le \|\be\|_A  \eta_w \|D^{-\frac{1}{2}} A \bw\| = \eta_w \|\be\|_A \|D^{\frac{1}{2}} \be\| = \eta_w \|\be\|_A \|\be\|_D.
\end{equation*}
This implies the desired weighted $\ell_2$-error estimate stated below.
\begin{theorem}\label{theorem: ell-2 upscaling error estimate}
Let $\be = \bu - (I-\pi_f)\pi_D \bu$ be the error between the solutions of the original fine-level problem \eqref{original problem} and the upscaled one \eqref{upscaled problem}. Then, the following weighted $\ell_2$--error estimate holds:
\begin{equation}\label{weighted ell-2 estimate}
\|\be\|_D \le \eta_w \|\be\|_A \le \eta^2_w \|D^{-\frac{1}{2}} A \bu\|.
\end{equation}
\end{theorem}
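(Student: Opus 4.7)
The plan is to adapt the classical Aubin--Nitsche duality argument to this algebraic setting, using the already-established energy error estimate \eqref{upscaling error estimate in energy} as the core ingredient. The second inequality $\|\be\|_A \le \eta_w^2\|D^{-1/2}A\bu\|$ is merely a chaining of the first inequality with Theorem~\ref{theorem: energy upscaling error estimate}, so the whole problem reduces to establishing the bound $\|\be\|_D \le \eta_w\|\be\|_A$.

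To obtain this bound I would introduce the dual problem $A\bw = D\be$, and observe that this choice is designed so that the $D$-norm squared of the error can be rewritten as an $A$-pairing:
\begin{equation*}
\|\be\|_D^2 = \be^T D\be = \be^T A\bw.
\end{equation*}
The next step exploits the Galerkin $A$-orthogonality of $\be$ to the modified coarse space $\Range((I-\pi_f)\pi_D)$, which was the crucial structural property used in Theorem~\ref{theorem: energy upscaling error estimate}. Concretely, letting $\overline{\bw}_c := (I-\pi_f)\pi_D\bw$, we may subtract $\be^T A\overline{\bw}_c = 0$ to get $\|\be\|_D^2 = \be^T A(\bw-\overline{\bw}_c)$, and Cauchy--Schwarz in the $A$-inner product yields $\|\be\|_D^2 \le \|\be\|_A\,\|\bw-\overline{\bw}_c\|_A$.

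Now the dual error $\bw-\overline{\bw}_c$ is precisely the upscaling error for the dual problem, so Theorem~\ref{theorem: energy upscaling error estimate} applied to $\bw$ gives $\|\bw-\overline{\bw}_c\|_A \le \eta_w\|D^{-1/2}A\bw\|$. The right-hand side collapses nicely: since $A\bw = D\be$, we have $\|D^{-1/2}A\bw\| = \|D^{-1/2}D\be\| = \|D^{1/2}\be\| = \|\be\|_D$. Combining gives $\|\be\|_D^2 \le \eta_w\|\be\|_A\|\be\|_D$, and dividing by $\|\be\|_D$ (trivially assuming it is nonzero) yields the first inequality. Chaining with Theorem~\ref{theorem: energy upscaling error estimate} produces the second.

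I do not foresee a serious obstacle: the method is standard once one notices that the modified coarse space admits Galerkin $A$-orthogonality and that Theorem~\ref{theorem: energy upscaling error estimate} applies \emph{verbatim} to the dual right-hand side $D\be$ in place of $\bbf$. The only mild point of care is that the duality works because the right-hand side of the energy estimate is measured in $\|D^{-1/2}\cdot\|$, which pairs naturally with $A\bw = D\be$ to produce $\|\be\|_D$ rather than some other norm; this is the structural reason the proof closes in one step instead of leaving a residual factor.
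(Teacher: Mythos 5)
Your argument is correct and is essentially identical to the paper's own proof: both use the Aubin--Nitsche duality argument with the dual problem $A\bw = D\be$, Galerkin $A$-orthogonality to the modified coarse space, Cauchy--Schwarz, and an application of Theorem~\ref{theorem: energy upscaling error estimate} to the dual error, with the second inequality following by chaining. No gaps.
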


\section{Modified coarse space using approximate inverses}\label{section: modified coarse space with approximate projection}
In this section, we discuss how to use approximations to make the modified coarse spaces more practical.  The basic idea is based on the well-conditioning of $A_f$ as shown in Theorem~\ref{thm:Af-well-cond} which allows for uniform polynomial approximation~\eqref{approx:poly}.  We argue that such an approximation keeps the sparsity of the modified prolongation matrix under control while maintaining the approximation properties of the modified coarse space reasonably well.
These are properties that make the resulting modified coarse spaces appropriate for upscaling as well for efficient use in multigrid methods in practice.

\subsection{Modification via Polynomial Approximation}
We begin with one possible choice of polynomial approximation.  Recall that according to~\eqref{spectral bounds}, the spectrum of $A_f$ is contained in $[1/\eta_w^2, 1] \subset (0, 1]$.  Therefore, we want to chose a polynomial $p_\nu$ of degree $\nu \ge 1$, such that $p_\nu(0) =1$ and $tp^2_\nu(t)$ has a small maximum norm over the interval $t \in [0,1]$.  One choice is the polynomial used in the smoothed aggregation algebraic multigrid (SA-AMG). It is  defined via the Chebyshev polynomials of odd degree, $T_{2\nu+1}$, as follows:
\begin{equation}\label{SA polynomial}
p_\nu(t) = \frac{(-1)^\nu}{2\nu+1} \frac{T_{2\nu+1}(\sqrt{t})} {\sqrt{t}}.
\end{equation}
As is  well-known (e.g., shown in~\cite{brezina vanek vassilevski, MLBFP, hu vassilevski and xu}), this polynomial has the following property
\begin{equation}\label{def:SA-poly-small}
\max\limits_{t \in (0,1]} \sqrt{t}|p_\nu(t)| = \frac{1}{2\nu+1}.
\end{equation}

Since $p_\nu(0) =1$, $p_\nu(t) = 1 - t q_{\nu-1}(t)$, where $q_{\nu-1}$ is a polynomial of degree $\nu-1$.  We actually use $q_{\nu-1}(t)$ to approximate $A_f^{-1}$, namely
\begin{equation}\label{def:approx-Af-inverse}
A_f^{-1} \approx {\widetilde A}^{-1}_f \equiv q_{\nu-1}(A_f).  
\end{equation}
By rewriting~\eqref{def:approx-Af-inverse},  we get
\begin{equation*}
I - {\widetilde A}^{-1}_f A_f = I - q_{\nu-1}(A_f) A_f = p_{\nu}(A_f).
\end{equation*}
The $A_f$-norm of this matrix can be made arbitrarily small as $\nu \rightarrow \infty$ by the property~\eqref{def:SA-poly-small}.


Letting ${\widetilde \pi}_f := P_\perp {\widetilde A}^{-1}_f P^T_\perp A$, we define the modified prolongation matrix ${\widetilde P}$ as follows,
\begin{equation}\label{widetilde P}
{\widetilde P} :=  (I-{\widetilde \pi}_f) P.
\end{equation}
The corresponding modified coarse space is $\Range((I-{\widetilde \pi}_f) P) = \Range((I-{\widetilde \pi}_f) \pi_D)$.   Note that, if we choose $\nu$ properly (sufficiently large but fixed), the modified prolongation matrix ${\widetilde P}$ stays reasonably sparse and can be used in practice with nearly optimal computational cost.

We notice that, the formula  ${\widetilde P} = (I - P_\perp q_{\nu-1}(A_f) P^T_\perp A) P$, somewhat resembles the construction of prolongation matrices used in SA-AMG. More specifically, in SA-AMG, we have ${\widetilde P} := p_\nu(D^{-1}A) P$. This observation offers the possibility to construct new SA-AMG methods by choosing simple $P_\perp$ (for example, not necessarily spanning the entire complement of $\Range(P)$)
so that $A_f:= P^T_\perp A P_\perp$  and hence the resulting ${\widetilde P}$ and respective modified coarse level matrix ${\widetilde P}^TA{\widetilde P}$ be reasonably sparse. 

\begin{remark}\label{remark: AWHB method}
We may also note that~${\widetilde P} = (I - P_\perp q_{\nu-1}(A_f) P^T_\perp A) P$ resembles the so-called {\em approximate wavelet modified hierarchical basis (AWMHB) method} where
$P_\perp$ (corresponding to the HB) is modified by polynomially based approximate $L_2$-projections to exhibit better energy stability (cf. \cite{vassilevski review} or \cite{MLBFP}).
\end{remark}

With the approximate modified coarse space, the two-level decomposition can be rewritten in the following perturbation form
\begin{equation}\label{approx-decomp}
\bu = (I-\pi_D)\bv  + (I-{\widetilde  \pi}_f) \pi_D\bu + ({\widetilde \pi_f} - \pi_f) \pi_D \bu. 
\end{equation}
Obviously, we do not have $A$-orthogonality anymore.  However, as we show later, the first two terms of the decomposition~\eqref{approx-decomp} are approximately $A$-orthogonal whereas the last term can be made small, which leads to the desired error estimates.

\subsection{Approximate Orthogonality}
To show that the first two terms of the decomposition~\eqref{approx-decomp} are approximately $A$-orthogonal, we prove that the two spaces $\Range(P_{\perp})$ ($=\Range(I - \pi_D)$) and $\Range({\widetilde P})$ ($=\Range( (I - {\widetilde \pi}_f) \pi_D)$) are approximately $A$-orthogonal.  To this end, we first establish some properties of $\pi_D$ and ${\widetilde \pi}_f$ summarized in the following lemma.  
\begin{lemma}\label{lem:piD-tildepif}
We have $\pi_D {\widetilde \pi}_f = 0$ and that $(I- {\widetilde \pi}_f)\pi_D$ is a projection.
\end{lemma}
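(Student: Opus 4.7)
The plan is to observe that the proof of Lemma~\ref{lem:piD-pif} uses only two structural ingredients: (i) that $\pi_f$ has the sandwich form $P_\perp \cdot M \cdot P_\perp^T A$ with some inner matrix $M$ (there, $M = A_f^{-1}$); and (ii) the orthogonality relation $P^T D P_\perp = 0$ from~\eqref{properties of P perp}. Since ${\widetilde \pi}_f = P_\perp\, q_{\nu-1}(A_f)\, P_\perp^T A$ has exactly the same outer factors $P_\perp$ and $P_\perp^T A$, with only the inner factor replaced by $M = q_{\nu-1}(A_f)$, the cancellation via $P^T D P_\perp = 0$ is insensitive to the choice of $M$. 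I would therefore simply repeat the argument of Lemma~\ref{lem:piD-pif} verbatim, with $A_f^{-1}$ replaced by $q_{\nu-1}(A_f)$.

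For the first claim, I would write
\begin{equation*}
\pi_D \, {\widetilde \pi}_f \;=\; P(P^T D P)^{-1} \underbrace{(P^T D P_\perp)}_{=\,0}\, q_{\nu-1}(A_f)\, P_\perp^T A \;=\; 0.
\end{equation*}
For the second claim, expanding and using both $\pi_D^2 = \pi_D$ and the identity just established,
\begin{equation*}
\bigl((I-{\widetilde \pi}_f)\pi_D\bigr)^2 \;=\; \pi_D^2 - {\widetilde \pi}_f \pi_D^2 - (\pi_D {\widetilde \pi}_f)\pi_D + {\widetilde \pi}_f (\pi_D {\widetilde \pi}_f)\pi_D \;=\; \pi_D - {\widetilde \pi}_f \pi_D,
\end{equation*}
which is exactly $(I - {\widetilde \pi}_f)\pi_D$, establishing the projection property.

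There is essentially no obstacle here. The content of the lemma is a stability statement: replacing the exact inverse $A_f^{-1}$ by its polynomial approximation $q_{\nu-1}(A_f)$ does not break the algebraic identities that underlie the two-level decomposition of Theorem~\ref{thm:two-level-A-orth-decomp}. This is intuitively clear because the polynomial approximation acts only on the \emph{inner} factor of ${\widetilde \pi}_f$ while the orthogonality~\eqref{properties of P perp} is what actually produces the cancellations, and it involves only the \emph{outer} factors $P^T D$ and $P_\perp$. The lemma thus serves mainly as the bookkeeping step that will allow the perturbation form~\eqref{approx-decomp} to be analyzed in terms of an approximate $A$-orthogonality between $\Range(I - \pi_D)$ and $\Range({\widetilde P})$ in the subsequent subsection.
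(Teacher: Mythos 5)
Your proof is correct and matches the paper's approach exactly: the paper simply states that the proof is the same as that of Lemma~\ref{lem:piD-pif}, which is precisely the verbatim repetition with $A_f^{-1}$ replaced by $q_{\nu-1}(A_f)$ that you carry out (and your observation that the cancellation depends only on the outer factors is confirmed by the paper's subsequent remark that the lemma holds for any approximation ${\widetilde A}_f^{-1}$).
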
 
\begin{proof}
The proof is the same as the proof of Lemma~\ref{lem:piD-pif}.
\end{proof}

\begin{remark}
Lemma~\ref{lem:piD-tildepif} actually holds for ${\widetilde \pi}_f$ obtained by approximating $A_f^{-1}$ with any  ${\widetilde A}^{-1}_f$ in the definition of $\pi_f$.  Therefore, this allows us to use, for example,  other polynomials, i.e.,  not only the SA polynomial~\eqref{SA polynomial}.
\end{remark}

Next, we estimate the cosine of the abstract angle between the two spaces. For any vectors $\bv_f$ and $\bv_c$ and use the property~\eqref{def:SA-poly-small} of the SA polynomial~\eqref{SA polynomial}, we have
\begin{equation} \label{ine:cos}
\begin{array}{rl}
\bv^T_f P^T_\perp A {\widetilde P}\bv_c & = \bv^T_f P^T_\perp A \left (I -P_\perp {\widetilde A}^{-1}_f P^T_\perp A \right ) P\bv_c\\
& = \left ((I- A P_\perp {\widetilde A}^{-1}_f P^T_\perp) A P_\perp \bv_f \right )^T P \bv_c\\
& = \left (A P_\perp (I - {\widetilde A}^{-1}_f P^T_\perp  A P_\perp) \bv_f \right )^T P \bv_c \\
& = \left (P_\perp (I- {\widetilde A}^{-1}_f A_f)\bv_f \right )^T A P\bv_c\\
& = \left (P_\perp p_\nu(A_f) \bv_f \right )^T A P \bv_c \\
& \le \sqrt{\bv^T_f A_f p^2_\nu(A_f) \bv_f } \sqrt{ \bv^T_c P^T A P\bv_c }\\
& \le \max_{t \in (0,1]} \sqrt{t} |p_\nu(t)|\; \|\bv_f\| \|P \bv_c\|_A\\
& \le \frac{1}{2\nu+1}\; \|\bv_f\| \|P \bv_c\|_A.
\end{array}
\end{equation}
Given $\bw$ and $\bv$, consider $P \bv_c = \pi_D \bw$ and $P_{\perp} \bv_f = (I-\pi_D)\bv$.  Then, from~\eqref{ine:cos} and use the facts that $\|\bv_f\| = \|(I-\pi_D)\bv\|_D$, $\|P \bv_c\|_A = \|\pi_D\bw\|_A$,  and ${\widetilde P} \bv_c = (I-{\widetilde \pi}_f )\pi_D\bw$, to obtain 
\begin{equation} \label{ine:cos-vw}
((I-\pi_D)\bv)^T A (I-{\widetilde \pi}_f) \pi_D\bw \le \frac{1}{2\nu+1}\|(I-\pi_D)\bv\|_D \|\pi_D \bw\|_A.
\end{equation}
From \eqref{scaled D}, the WAP~\eqref{WAP with projection}, we have
\begin{equation*}
\|(I-\pi_D)\bv\|_A \le \|(I-\pi_D)\bv\|_D \le \eta_w \|\bv\|_A \text{ and } 
\end{equation*}
hence by  Kato's Lemma (\cite{MLBFP}), 
\begin{equation}\label{Kato}
\|\pi_D\|_A = \|I -\pi_D\|_A \le \eta_w.
\end{equation}
The latter estimates together with~\eqref{ine:cos-vw} imply,
\begin{equation}\label{almost orthogonality_0}
((I-\pi_D)\bv)^T A (I-{\widetilde \pi}_f) \pi_D\bw \le \frac{\eta^2_w}{2\nu+1}\; \|\bv\|_A\|\bw\|_A.
\end{equation}
This gives us the desired approximate $A$-orthogonality result stated below.
\begin{theorem}\label{theorem: almost orthogonality}
Assume the SA polynomial~\eqref{SA polynomial} is used to define ${\widetilde \pi}_f$, then the approximate modified coarse space $\Range({\widetilde P})$ ($=\Range( (I - {\widetilde \pi}_f) \pi_D)$) and the hierarchical complement $\Range(P_\perp)$ ($=\Range(I - \pi_D)$) of the original coarse space $\Range(P)$ are almost $A$-orthogonal in the following sense,
\begin{equation}\label{almost orthogonality}
((I-\pi_D)\bv)^T A (I-{\widetilde \pi}_f) \pi_D\bw 
\le \frac{\eta^2_w}{2\nu+1}\; \|(I-\pi_D)\bv\|_A \|(I-{\widetilde \pi}_f) \pi_D\bw\|_A.
\end{equation}
\end{theorem}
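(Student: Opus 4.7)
The inequality \eqref{almost orthogonality_0}, just derived in the text preceding the theorem, already gives the bilinear bound
\[
((I-\pi_D)\bv)^T A (I-{\widetilde \pi}_f) \pi_D\bw \le \frac{\eta^2_w}{2\nu+1}\, \|\bv\|_A\|\bw\|_A
\]
for \emph{any} $\bv$ and $\bw$. The target statement differs only in that the norms $\|\bv\|_A$ and $\|\bw\|_A$ on the right are replaced by the tighter $\|(I-\pi_D)\bv\|_A$ and $\|(I-{\widetilde \pi}_f)\pi_D\bw\|_A$. My plan is to obtain this upgrade by a substitution argument exploiting that both sides of \eqref{almost orthogonality_0} factor through the projections.

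First I would observe that the left-hand side depends on $\bv$ only through $(I-\pi_D)\bv$ and on $\bw$ only through $(I-{\widetilde \pi}_f)\pi_D\bw$. For the $\bv$-dependence this is immediate since $I-\pi_D$ is a projection, so $(I-\pi_D)(I-\pi_D)=I-\pi_D$. For the $\bw$-dependence I would invoke Lemma~\ref{lem:piD-tildepif}, which says $(I-{\widetilde\pi}_f)\pi_D$ is a projection; hence $(I-{\widetilde\pi}_f)\pi_D\,(I-{\widetilde\pi}_f)\pi_D = (I-{\widetilde\pi}_f)\pi_D$.

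The substitution itself is then the core step: apply \eqref{almost orthogonality_0} with $\bv$ replaced by $\bv':=(I-\pi_D)\bv$ and $\bw$ replaced by $\bw':=(I-{\widetilde\pi}_f)\pi_D\bw$. By the two idempotency observations above, the left-hand side is unchanged, while the right-hand side becomes $\frac{\eta^2_w}{2\nu+1}\,\|(I-\pi_D)\bv\|_A\,\|(I-{\widetilde\pi}_f)\pi_D\bw\|_A$, which is exactly \eqref{almost orthogonality}.

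I do not foresee a genuine obstacle here, since all the analytic work (the polynomial bound \eqref{def:SA-poly-small}, the condition number estimate on $A_f$ from Theorem~\ref{thm:Af-well-cond}, the Kato-type bound \eqref{Kato}, and the weak approximation property) has been absorbed into \eqref{almost orthogonality_0}. The only delicate point worth stating explicitly is the verification that $(I-{\widetilde\pi}_f)\pi_D$ is a projection, but this is precisely the content of Lemma~\ref{lem:piD-tildepif}, which does not rely on $\widetilde{A}_f^{-1}$ being the exact inverse and therefore applies to the polynomial approximation used here.
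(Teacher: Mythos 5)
Your proposal is correct and is essentially identical to the paper's own proof, which likewise applies \eqref{almost orthogonality_0} with $\bv:=(I-\pi_D)\bv$ and $\bw:=(I-{\widetilde\pi}_f)\pi_D\bw$ and invokes the idempotency of $I-\pi_D$ and of $(I-{\widetilde\pi}_f)\pi_D$ (Lemma~\ref{lem:piD-tildepif}). No gaps.
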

\begin{proof}
Apply \eqref{almost orthogonality_0} for $\bv:=(I-\pi_D)\bv$ and $\bw: = (I-{\widetilde \pi}_f) \pi_D\bw$ and use the facts that both $\pi_D$ and $(I-{\widetilde \pi}_f) \pi_D$ are projections. 
\end{proof}


\subsection{Energy Error Estimate}
The second result we prove is an energy error estimate using the approximate modified coarse space $\Range({\widetilde P})$.  We start with the following lemma which shows that the third term in the perturbed decomposition~\eqref{approx-decomp} is small.
\begin{lemma}\label{lem:small-3rd-term}
Assume the SA polynomial~\eqref{SA polynomial} is used to define ${\widetilde \pi}_f$, then we have
\begin{equation}\label{ine:small-3rd-term}
\|({\widetilde \pi}_f - \pi_f) \pi_D\bu\|_A \leq  \frac{\eta^2_w}{2\nu+1}\; \|\bu\|_A
\end{equation}
\end{lemma}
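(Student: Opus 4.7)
The plan is to rewrite the error $({\widetilde\pi}_f-\pi_f)\pi_D\bu$ explicitly in terms of the polynomial residual $p_\nu(A_f)$ and then reduce the estimate to a purely spectral computation on $A_f$, which is available because Theorem~\ref{thm:Af-well-cond} gives $\sigma(A_f)\subset[1/\eta_w^2,1]$. Using ${\widetilde A}_f^{-1}=q_{\nu-1}(A_f)$ together with $I-A_f q_{\nu-1}(A_f)=p_\nu(A_f)$, I would first write
\begin{equation*}
{\widetilde A}_f^{-1}-A_f^{-1}=-A_f^{-1}p_\nu(A_f),\qquad ({\widetilde\pi}_f-\pi_f)\pi_D\bu=-P_\perp A_f^{-1}p_\nu(A_f)\,\bg,
\end{equation*}
where $\bg:=P_\perp^T A\pi_D\bu$. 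Since $P_\perp^T A P_\perp=A_f$ and polynomials in $A_f$ commute with $A_f^{-1}$, computing the $A$-norm squared gives the clean expression
\begin{equation*}
\|({\widetilde\pi}_f-\pi_f)\pi_D\bu\|_A^2 = \bg^T A_f^{-1}p_\nu^2(A_f)\bg.
\end{equation*}

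Next I would extract the extremal property \eqref{def:SA-poly-small} in functional-calculus form: on $\sigma(A_f)\subset(0,1]$ we have $t\,p_\nu^2(t)\le 1/(2\nu+1)^2$, i.e.\ $p_\nu^2(A_f)\preceq (2\nu+1)^{-2}A_f^{-1}$. This peels off one power of the spectrum and yields
\begin{equation*}
\bg^T A_f^{-1}p_\nu^2(A_f)\bg \;\le\; \frac{1}{(2\nu+1)^2}\,\bg^T A_f^{-2}\bg.
\end{equation*}
Using the other half of Theorem~\ref{thm:Af-well-cond}, namely $A_f^{-1}\preceq \eta_w^2 I$, I replace exactly one of the two factors of $A_f^{-1}$: $\bg^T A_f^{-2}\bg \le \eta_w^2\,\bg^T A_f^{-1}\bg$. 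The remaining quantity $\bg^T A_f^{-1}\bg$ is naturally identified with $\|P_\perp A_f^{-1}\bg\|_A^2=\|\pi_f\pi_D\bu\|_A^2$, and since $\pi_f$ is the $A$-orthogonal projection onto $\Range(P_\perp)$, combining with Kato's bound \eqref{Kato} gives $\|\pi_f\pi_D\bu\|_A\le\|\pi_D\bu\|_A\le\eta_w\|\bu\|_A$. Chaining the three inequalities produces $\|({\widetilde\pi}_f-\pi_f)\pi_D\bu\|_A^2\le \eta_w^4(2\nu+1)^{-2}\|\bu\|_A^2$, and taking the square root is \eqref{ine:small-3rd-term}.

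The main delicate point is to spend the well-conditioning of $A_f$ \emph{exactly once}. A naive attempt would try $p_\nu^2(A_f)\preceq\max_t p_\nu^2(t)\cdot I$, but $p_\nu(0)=1$ makes that useless; the only usable bound on $p_\nu$ is the weighted one \eqref{def:SA-poly-small}, which produces the factor $A_f^{-1}$ that must be compensated by the spectral bound $A_f^{-1}\preceq\eta_w^2 I$. The second factor of $A_f^{-1}$ should \emph{not} be estimated the same way, because doing so would waste the fact that $\bg$ is the image of $\pi_D\bu$ under $P_\perp^T A$ and would give an inferior $\eta_w^3$ scaling instead of the sharper $\eta_w^2$. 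Preserving that structure, which ultimately reflects the identity $\bg^T A_f^{-1}\bg=\|\pi_f\pi_D\bu\|_A^2$, is what couples the polynomial estimate with Kato's lemma and delivers the stated $\eta_w^2/(2\nu+1)$ bound.
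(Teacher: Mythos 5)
Your proof is correct and follows essentially the same route as the paper's: both rest on the identity ${\widetilde A}_f^{-1}-A_f^{-1}=-A_f^{-1}p_\nu(A_f)$, the bound $\max_t\sqrt{t}\,|p_\nu(t)|=\tfrac{1}{2\nu+1}$, the spectral lower bound $\lambda_{\min}(A_f)\ge\eta_w^{-2}$, and Kato's estimate $\|\pi_D\|_A\le\eta_w$. The only difference is bookkeeping — you argue with quadratic forms and the contraction property of $\pi_f$, while the paper factors out $\|P\bu_c\|_A$ via the isometry $A^{1/2}P_\perp A_f^{-1/2}$ and bounds $\|p_\nu(A_f)\|$ directly — and both yield the identical constant $\eta_w^2/(2\nu+1)$.
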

\begin{proof}
Let $\pi_D \bu =P\bu_c$, and consider the deviation term
\begin{equation}\label{deviation identity}
\begin{array}{rl}
\|({\widetilde \pi}_f - \pi_f) \pi_D\bu\|_A & = \|P_\perp ({\widetilde A}^{-1}_f - A^{-1}_f) P^T_\perp A P \bu_c\|_A\\
& = \|A^{\frac{1}{2}}_f \left ((I-p_\nu(A_f))A^{-1}_f - A^{-1}_f \right ) P^T_\perp A P \bu_c\|\\
& = \| p_\nu(A_f) A^{-\frac{1}{2}}_f P^T_\perp A P \bu_c\|\\
&\le \|p_\nu(A_f)  A^{-\frac{1}{2}}_f  P^T_\perp  A^{\frac{1}{2}}\| \|P\bu_c\|_A\\
& = \|A^{\frac{1}{2}} P_\perp A^{-\frac{1}{2}}_f p_\nu(A_f) \| \|P\bu_c\|_A\\
& = \|p_\nu(A_f) \| \|P\bu_c\|_A.
\end{array}
\end{equation}
For the SA polynomial~\eqref{SA polynomial}, using the fact that $\lambda_{\min}(A_f) \ge \frac{1}{\eta^2_w}$ (see \eqref{spectral bounds}) and $\|\pi_D\|_A \le \eta_w$, \eqref{Kato},  we have
\begin{equation*}
\begin{array}{rl}
\|({\widetilde \pi}_f - \pi_f) \pi_D\bu\|_A
& \le \frac{1}{\sqrt{\lambda_{\min}(A_f)}}
 \max\limits_{t \in [0,1]} \sqrt{t}|p_\nu(t)| \; \|P \bu_c\|_A\\
& \le \frac{\eta_w}{2\nu+1}\; \|P\bu_c\|_A \\
& = \frac{\eta_w}{2\nu+1}\; \|\pi_D \bu \|_A \\
& \le \frac{\eta^2_w}{2\nu+1}\; \|\bu\|_A,
\end{array}
\end{equation*}
which completes the proof.
\end{proof}

Consider the modified coarse problem based on the approximate inverse ${\widetilde A}^{-1}_f$ in ${\widetilde P}$, as follows 
\begin{equation*}
{\widetilde P}^T A {\widetilde P}  {\widetilde \bu}_c  = {\widetilde P}^T\bbf.
\end{equation*}
Let ${\widetilde \bu} = {\widetilde P} {\widetilde \bu}_c \in \Range({\widetilde P})$  be the respective coarse (upscaled) solution.
We have the following energy error estimate which is an extension of energy error estimate~\eqref{upscaling error estimate in energy}.
\begin{theorem}\label{theorem: energy error estimates}
If $p_\nu$ is the SA polynomial~\eqref{SA polynomial}, then the following energy error estimate holds
\begin{equation}\label{linear error decay}
\|\bu - {\widetilde P}  {\widetilde \bu}_c \|_A \le 
\|\be\|_A + \|({\widetilde \pi}_f - \pi_f) \pi_D\bu\|_A
\le \eta_w \|D^{-\frac{1}{2}} A \bu\| + \frac{\eta^2_w}{2\nu+1}\;\|\bu\|_A,
\end{equation}
with the perturbation term (last term on the right hand side) exhibiting linear decay in $\nu$.

\end{theorem}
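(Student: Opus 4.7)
The plan is to exploit the Galerkin (best approximation) property of $\widetilde{P}\widetilde{\bu}_c$ in the $A$-norm: since $\widetilde{\bu}_c$ solves $\widetilde{P}^T A \widetilde{P}\,\widetilde{\bu}_c = \widetilde{P}^T\bbf$, the element $\widetilde{P}\widetilde{\bu}_c \in \Range(\widetilde{P})$ minimizes $\|\bu - \bw\|_A$ over all $\bw \in \Range(\widetilde{P})$. So the task reduces to producing a single convenient element of $\Range(\widetilde{P})$ whose distance to $\bu$ we can estimate.

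The natural candidate to test against is the \emph{perturbed} coarse vector $\bw := (I-\widetilde{\pi}_f)\pi_D\bu$, which clearly lies in $\Range(\widetilde{P}) = \Range((I-\widetilde{\pi}_f)\pi_D)$. The key algebraic observation is the telescoping identity
\begin{equation*}
\bu - (I-\widetilde{\pi}_f)\pi_D\bu \;=\; \bigl[\bu - (I-\pi_f)\pi_D\bu\bigr] \;+\; (\widetilde{\pi}_f - \pi_f)\pi_D\bu \;=\; \be + (\widetilde{\pi}_f - \pi_f)\pi_D\bu,
\end{equation*}
where $\be$ is exactly the error from the (computationally infeasible) exact modified coarse space analyzed in Theorem~\ref{theorem: energy upscaling error estimate}. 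Here I use the identity $P\bu_c = \pi_D\bu$ implicit in the $A$-orthogonal decomposition of Theorem~\ref{thm:two-level-A-orth-decomp}, so that $(I-\pi_f)P\bu_c = (I-\pi_f)\pi_D\bu$.

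Combining the best-approximation bound with the triangle inequality then immediately yields the first inequality of \eqref{linear error decay}:
\begin{equation*}
\|\bu - \widetilde{P}\widetilde{\bu}_c\|_A \;\le\; \|\bu - (I-\widetilde{\pi}_f)\pi_D\bu\|_A \;\le\; \|\be\|_A + \|(\widetilde{\pi}_f - \pi_f)\pi_D\bu\|_A.
\end{equation*}
The second inequality of \eqref{linear error decay} follows by invoking two already-proven results: Theorem~\ref{theorem: energy upscaling error estimate} to bound the first term by $\eta_w\|D^{-1/2}A\bu\|$, and Lemma~\ref{lem:small-3rd-term} to bound the perturbation term by $\frac{\eta_w^2}{2\nu+1}\|\bu\|_A$.

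I do not anticipate any serious obstacle in this proof; the whole argument is essentially a one-line triangle inequality on top of the identification of the correct test vector in $\Range(\widetilde{P})$. The only subtle point to double-check is that $P\bu_c$ can indeed be replaced by $\pi_D\bu$ when comparing the exact and approximate coarse corrections — this uses the fact that both exact and approximate coarse solutions share the same coarse representative $\pi_D\bu$ in the decomposition, which is guaranteed by $\pi_D(I-\widetilde{\pi}_f)\pi_D = \pi_D$ (Lemma~\ref{lem:piD-tildepif}) and the analogous identity from Lemma~\ref{lem:piD-pif}. Once that is in hand, the linear decay $O(1/(2\nu+1))$ of the perturbation term, inherited from the Chebyshev-based SA polynomial bound \eqref{def:SA-poly-small}, gives the advertised rate.
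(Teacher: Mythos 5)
Your proposal is correct and follows essentially the same route as the paper's proof: use the Galerkin best-approximation property of $\widetilde{P}\widetilde{\bu}_c$, test against $(I-\widetilde{\pi}_f)\pi_D\bu \in \Range(\widetilde{P})$, split via the identity $\bu - (I-\widetilde{\pi}_f)\pi_D\bu = \be + (\widetilde{\pi}_f-\pi_f)\pi_D\bu$ (which the paper obtains from the perturbed decomposition~\eqref{approx-decomp} together with $\be=(I-\pi_D)\bv$), and then invoke Theorem~\ref{theorem: energy upscaling error estimate} and Lemma~\ref{lem:small-3rd-term}. Your side remark identifying the exact upscaled solution with $(I-\pi_f)\pi_D\bu$ is also consistent with how the paper uses the $A$-orthogonal decomposition of Theorem~\ref{thm:two-level-A-orth-decomp}.
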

\begin{proof}
Since the coarse solution is the best approximation to the solution $\bu$ of the original linear system~\eqref{original problem} from the modified coarse space in the $A$-norm, we have
\begin{equation*}
\|\bu - {\widetilde P}  {\widetilde \bu}_c \|_A = \min\limits_{\bv_c}\; \|\bu - {\widetilde P}\bv_c\|_A.
\end{equation*}
Note that $(I - {\widetilde \pi}_f) \pi_D \bu \in \Range({\widetilde P})$, then we have 
\begin{equation*}
\|\bu - {\widetilde P}  {\widetilde \bu}_c \|_A = \min\limits_{\bv_c}\; \|\bu - {\widetilde P}\bv_c\|_A \leq \|\bu- (I- {\widetilde \pi}_f) \pi_D\bu\|_A.
\end{equation*}
Hence, according to the decomposition~\eqref{approx-decomp} and~$\be = (I -\pi_D) \bv$ in~\eqref{def:e}, we have
\begin{equation*}
 \|\bu - {\widetilde P}  {\widetilde \bu}_c \|_A \le \|\bu- (I- {\widetilde \pi}_f) \pi_D\bu\|_A = \|\be + ({\widetilde \pi}_f - \pi_f) \pi_D\bu\|_A\le 
\|\be\|_A + \|({\widetilde \pi}_f - \pi_f) \pi_D\bu\|_A.
\end{equation*}
Then apply Theorem~\ref{theorem: energy upscaling error estimate} to the first term and Lemma~\ref{lem:small-3rd-term} to the second term, to arrive at~\eqref{linear error decay}.
\end{proof}

Further, assume that $D$ is well-conditioned, we have the following approximate the SAP, which is a perturbation of  Corollary~\eqref{coro:SAP}.
\begin{corollary}
If $p_\nu$ is the SA polynomial~\eqref{SA polynomial}, then
\begin{equation*}
\| A \|^{\frac{1}{2}} \| \bu - {\widetilde P} {\widetilde \bu}_c \|_A \leq \| D \|^{\frac{1}{2}} \| \bu - {\widetilde P} {\widetilde \bu}_c \|_A  \leq \eta_s^{\frac{1}{2}} \| A \bu \| + \frac{\eta_w \| D \|^{\frac{1}{2}} }{2 \nu + 1} \| \bu \|_A.
\end{equation*}
where~$\eta_s \le \|D\|\|D^{-1}\| \eta^2_w$.  If $D$ is well-conditioned, then $\eta_s$ is bounded above by a constant.
\end{corollary}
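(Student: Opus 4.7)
The plan is to treat this corollary as the natural scaled companion of the energy error estimate in Theorem~\ref{theorem: energy error estimates}, obtained by exactly the same bookkeeping that turned Theorem~\ref{theorem: energy upscaling error estimate} into Corollary~\ref{coro:SAP}. No new facts about the projections $\pi_D$ and ${\widetilde \pi}_f$ are needed; the argument is a chain of three short inequalities.

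First I would dispose of the leftmost inequality. The spectral relation $\bv^T A\bv \le \bv^T D\bv$ in~\eqref{scaled D} implies $\|A\| \le \|D\|$, so taking square roots and multiplying the common factor $\|\bu - {\widetilde P}{\widetilde \bu}_c\|_A$ gives $\|A\|^{1/2}\|\bu - {\widetilde P}{\widetilde \bu}_c\|_A \le \|D\|^{1/2}\|\bu - {\widetilde P}{\widetilde \bu}_c\|_A$. Next I would multiply the energy error bound~\eqref{linear error decay} through by $\|D\|^{1/2}$ to obtain
\begin{equation*}
\|D\|^{1/2}\|\bu - {\widetilde P}{\widetilde \bu}_c\|_A \le \eta_w\|D\|^{1/2}\|D^{-1/2}A\bu\| + \frac{\eta_w^2\|D\|^{1/2}}{2\nu+1}\|\bu\|_A.
\end{equation*}
The second term is already in the desired form. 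For the first term I would use the elementary bound $\|D^{-1/2}A\bu\|^2 = (A\bu)^T D^{-1}(A\bu) \le \|D^{-1}\|\,\|A\bu\|^2$, which yields $\eta_w\|D\|^{1/2}\|D^{-1/2}A\bu\| \le \eta_w\bigl(\|D\|\|D^{-1}\|\bigr)^{1/2}\|A\bu\|$. Identifying $\|D\|\|D^{-1}\|\eta_w^2$ with (the stated upper bound on) $\eta_s$ as in Corollary~\ref{coro:SAP} turns the prefactor into $\eta_s^{1/2}$ and completes the estimate. The final clause about well-conditioned $D$ is immediate since $\|D\|\|D^{-1}\|$ is precisely the condition number of $D$.

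There is no real obstacle here; the result is essentially a two-line consequence of Theorem~\ref{theorem: energy error estimates} and Corollary~\ref{coro:SAP}. The one subtlety worth being explicit about is the direction of the $\eta_s$ bound: since Corollary~\ref{coro:SAP} only asserts $\eta_s \le \|D\|\|D^{-1}\|\eta_w^2$, the proof should interpret $\eta_s^{1/2}$ through this explicit upper estimate (equivalently, write $\eta_w(\|D\|\|D^{-1}\|)^{1/2}$ in place of $\eta_s^{1/2}$ in the intermediate step), rather than trying to use the sharpest admissible $\eta_s$, which would flip the inequality the wrong way.
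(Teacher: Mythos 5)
Your route is the paper's route: the paper offers no separate proof of this corollary, presenting it as an immediate ``perturbation'' of Corollary~\ref{coro:SAP} obtained by scaling the estimate of Theorem~\ref{theorem: energy error estimates}, and your three steps ($\|A\|\le\|D\|$ from \eqref{scaled D}, multiplying \eqref{linear error decay} by $\|D\|^{1/2}$, and $\|D^{-1/2}A\bu\|\le\|D^{-1}\|^{1/2}\|A\bu\|$ to produce the $\eta_s^{1/2}$ prefactor) are exactly the intended bookkeeping. Your remark about reading $\eta_s^{1/2}$ through the explicit upper bound $\|D\|\|D^{-1}\|\eta_w^2$ is also the right way to interpret the statement.

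One point you gloss over: after multiplying \eqref{linear error decay} by $\|D\|^{1/2}$, your perturbation term is $\frac{\eta_w^2\|D\|^{1/2}}{2\nu+1}\|\bu\|_A$, which is \emph{not} ``already in the desired form'' --- the corollary as stated has $\frac{\eta_w\|D\|^{1/2}}{2\nu+1}\|\bu\|_A$, one power of $\eta_w$ fewer, and since $\eta_w\ge 1$ that is a strictly stronger claim than what your chain (or Theorem~\ref{theorem: energy error estimates}) delivers. Tracing back through Lemma~\ref{lem:small-3rd-term}, the extra factor of $\eta_w$ comes from the bound $\|\pi_D\bu\|_A\le\eta_w\|\bu\|_A$, which cannot be dispensed with since $\pi_D$ is not an $A$-orthogonal projection; so the discrepancy is almost certainly a typographical slip in the paper rather than something you can close. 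You should either state the corollary with $\eta_w^2$ in the perturbation term or explicitly flag the mismatch, rather than asserting the term already matches.
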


\subsection{Other Approximations}
The SA polynomial~\eqref{SA polynomial} is just one possible choice for approximating $A_f^{-1}$.  There are other possible choices as well.  In this subsection, we briefly discuss other possibilities.

If we have the WAP constant $\eta_w$ explicitly available, that is, we have explicit eigenvalue bounds, 
 $\lambda(A_f) \in [\alpha,\;\beta] \subset [\frac{1}{\eta^2_w},\; 1]$, we can use the (best) Chebyshev polynomial
\begin{equation}\label{Chebyshev polynomial}
p_\nu(t) = \frac{T_\nu\left ( \frac{\beta+\alpha - 2t}{\beta-\alpha}\right )}{T_\nu\left ( \frac{\beta+\alpha}{\beta-\alpha}\right )}.
\end{equation}
Then due to the optimality property of Chebyshev polynomial,
\begin{equation*}
\|p_\nu(A_f)\| \le \frac{2q^\nu}{1+q^{2\nu}}, \quad q = \frac{\eta_w -1}{\eta_w+1},
\end{equation*}
together with the identity \eqref{deviation identity},
we end up with the following error estimate. 
\begin{theorem}\label{thm:Cheb-error}
If $p_\nu$ is the Chebyshev polynomial~\eqref{Chebyshev polynomial} used to define the approximate modified coarse space 
$\text{Range}({\widetilde P})$, then the following energy error estimate holds
\begin{equation}\label{geometric error decay}
\|\bu - {\widetilde P}  {\widetilde \bu}_c \|_A \le  \eta_w \|D^{-\frac{1}{2}} A \bu\| + \frac{2q^\nu \eta_w}{1+q^{2\nu}}\;\|\bu\|_A,
\end{equation}
where now the perturbation term exhibiting geometric decay in $\nu$.

\end{theorem}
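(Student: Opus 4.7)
The plan is to mirror the proof of Theorem~\ref{theorem: energy error estimates} essentially verbatim, changing only the step where $\|p_\nu(A_f)\|$ is estimated. The overall structure of the argument---best approximation in the modified coarse space, followed by the perturbed $A$-orthogonal decomposition~\eqref{approx-decomp} and a triangle inequality---is independent of which polynomial $p_\nu$ is used, as long as ${\widetilde A}_f^{-1}$ is still defined via $A_f^{-1} - {\widetilde A}_f^{-1} = A_f^{-1} p_\nu(A_f)$ with $p_\nu(0)=1$.

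Concretely, I would proceed as follows. First, since ${\widetilde \bu}_c$ is the Galerkin coefficient vector and $(I-{\widetilde \pi}_f)\pi_D\bu \in \Range({\widetilde P})$, best approximation in the $A$-norm yields
\begin{equation*}
\|\bu - {\widetilde P}{\widetilde \bu}_c\|_A \le \|\bu - (I-{\widetilde \pi}_f)\pi_D\bu\|_A.
\end{equation*}
Next, I would invoke the perturbed decomposition~\eqref{approx-decomp} together with $\be = (I-\pi_D)\bv$ from~\eqref{def:e} to write
\begin{equation*}
\bu - (I-{\widetilde \pi}_f)\pi_D\bu = \be + ({\widetilde \pi}_f - \pi_f)\pi_D\bu,
\end{equation*}
and by the triangle inequality bound this by $\|\be\|_A + \|({\widetilde \pi}_f - \pi_f)\pi_D\bu\|_A$. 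The first term is immediately controlled by Theorem~\ref{theorem: energy upscaling error estimate}, which gives $\|\be\|_A \le \eta_w\|D^{-1/2}A\bu\|$ and is the source of the first summand in~\eqref{geometric error decay}.

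For the perturbation term, I would reuse the identity chain already worked out in the proof of Lemma~\ref{lem:small-3rd-term}, which is purely algebraic (it only uses $P_\perp^T D P_\perp = I$, $A_f = P_\perp^T A P_\perp$, and $A_f^{-1} - {\widetilde A}_f^{-1} = A_f^{-1} p_\nu(A_f)$) and therefore applies unchanged. That identity yields
\begin{equation*}
\|({\widetilde \pi}_f - \pi_f)\pi_D\bu\|_A \le \|p_\nu(A_f)\|\,\|\pi_D\bu\|_A,
\end{equation*}
and Kato's estimate \eqref{Kato} further bounds $\|\pi_D\bu\|_A \le \eta_w\|\bu\|_A$. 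At this point the argument diverges from the SA case: instead of the bound on $\sqrt{t}|p_\nu(t)|$ (which required dividing by $\sqrt{\lambda_{\min}(A_f)}$), the Chebyshev polynomial~\eqref{Chebyshev polynomial} is the minimax polynomial for $|p_\nu(t)|$ itself on $[\alpha,\beta]=[1/\eta_w^2,1]$, so the spectral inclusion~\eqref{spectral bounds} and the classical extremal property of Chebyshev polynomials of the first kind give $\|p_\nu(A_f)\| \le 2q^\nu/(1+q^{2\nu})$ with $q=(\eta_w-1)/(\eta_w+1)$. Combining these two inequalities produces the perturbation bound $\frac{2q^\nu \eta_w}{1+q^{2\nu}}\,\|\bu\|_A$, and adding it to $\eta_w\|D^{-1/2}A\bu\|$ completes~\eqref{geometric error decay}.

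There is no real obstacle: every nontrivial structural ingredient (best approximation, the decomposition, the algebraic identity in Lemma~\ref{lem:small-3rd-term}, and Kato's estimate) has already been established. The only genuinely new step is the Chebyshev norm bound, and this is a textbook consequence of transplanting $T_\nu$ from $[-1,1]$ to $[\alpha,\beta]$; the mild care needed is to note that the condition number $\beta/\alpha = \eta_w^2$ yields precisely the parameter $q=(\eta_w-1)/(\eta_w+1)$ in the standard Chebyshev convergence rate, so the geometric (as opposed to $O(1/\nu)$) decay in~\eqref{geometric error decay} follows directly.
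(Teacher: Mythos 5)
Your proposal is correct and takes essentially the same route as the paper: the authors likewise reuse the best-approximation/triangle-inequality structure of Theorem~\ref{theorem: energy error estimates}, apply the identity~\eqref{deviation identity} together with Kato's estimate~\eqref{Kato}, and replace the SA bound by the Chebyshev optimality bound $\|p_\nu(A_f)\|\le \frac{2q^\nu}{1+q^{2\nu}}$ on $[\alpha,\beta]\subset[1/\eta_w^2,1]$. No gaps.
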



It is clear that error estimate \eqref{geometric error decay} is much better than \eqref{linear error decay}. 
We note that in the spectral AMGe method in the form presented in \cite{brezina_vassilevski:2011}, explicit bounds of $\eta_w$ are available.  Therefore, the Chebyshev polynomial~\eqref{Chebyshev polynomial} can be used to modify the coarse space in the spectral AMGe setting.

Using either the SA polynomial~\eqref{SA polynomial} or the Chebyshev polynomial~\eqref{Chebyshev polynomial} basically provides an approximate solution to the linear system~\eqref{matrix form of A_f-projection}.  Therefore, another way to solve~\eqref{matrix form of A_f-projection} is via nonlinear iterative methods such as the conjugate gradient (CG) method.  Using CG implicitly constructs a polynomial $p_{\nu}(t)$ which  defines ${\widetilde \pi}_f$. The convergence analysis of CG can be used to estimate $\|({\widetilde \pi}_f - \pi_f) \pi_D\bu\|_A$.  Denote the $\nu$-th iteration of CG for solving $A_f {\overline \bu}_c = P_{\perp}^T A P \bu_c$ by ${\overline \bu}_c^{\nu}$ with zero initial guess, then similarly to~\eqref{deviation identity}, we have
\begin{align*}
\|({\widetilde \pi}_f - \pi_f) \pi_D\bu\|_A 
& = \|P_\perp ({\widetilde A}^{-1}_f - A^{-1}_f) P^T_\perp A P \bu_c\|_A = \| ({\widetilde A}^{-1}_f - A^{-1}_f) P^T_\perp A P \bu_c \|_{A_f}\\
& = \| {\overline \bu}_c^{\nu} - {\overline \bu}_c \|_{A_f} \leq 2 q^{\nu} \| {\overline \bu}_c \|_{A_f}  =  2 q^{\nu} \| A_f^{-1} P_{\perp}^T A P \bu_c \|_{A_f}  \\
&\le 2 q^{\nu} \|A^{-\frac{1}{2}}_f  P^T_\perp  A^{\frac{1}{2}}\| \|P\bu_c\|_A = 2 q^{\nu} \|A^{\frac{1}{2}} P_\perp A^{-\frac{1}{2}}_f  \| \|P\bu_c\|_A\\
& = 2 q^{\nu} \|P\bu_c\|_A \leq 2  q^{\nu} \eta_w \| \bu \|_A.
\end{align*}
Therefore, we have the following result. 
\begin{theorem}\label{thm:CG-error}
If $p_\nu$ is the polynomial generated by CG, then the following energy error estimate holds
\begin{equation}\label{ine:CG-error}
\|\bu - {\widetilde P}  {\widetilde \bu}_c \|_A \le  \eta_w \|D^{-\frac{1}{2}} A \bu\| +  2q^\nu \eta_w\;\|\bu\|_A,
\end{equation}
with the perturbation term exhibiting geometric decay in $\nu$.

\end{theorem}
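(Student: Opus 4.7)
The plan is to imitate the proof of Theorem~\ref{theorem: energy error estimates} essentially verbatim, replacing the SA-polynomial perturbation bound from Lemma~\ref{lem:small-3rd-term} by the CG-generated one that has just been derived in the paragraph preceding the theorem. Since the Galerkin coarse solution $\widetilde P \widetilde\bu_c$ is, by construction, the $A$-norm best approximation to $\bu$ from $\Range(\widetilde P)$, any convenient element of $\Range(\widetilde P)$ yields an upper bound, and the natural candidate is $(I - \widetilde\pi_f)\pi_D \bu$.

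Concretely, I would first write
\begin{equation*}
\|\bu - \widetilde P \widetilde\bu_c\|_A \;=\; \min_{\bv_c}\|\bu - \widetilde P \bv_c\|_A \;\le\; \|\bu - (I-\widetilde\pi_f)\pi_D \bu\|_A.
\end{equation*}
Then invoke the perturbed two-level decomposition~\eqref{approx-decomp} together with the identification $\be = (I-\pi_D)\bv$ from~\eqref{def:e}, which gives
\begin{equation*}
\bu - (I-\widetilde\pi_f)\pi_D \bu \;=\; \be \;+\; (\widetilde\pi_f - \pi_f)\pi_D \bu,
\end{equation*}
so that the triangle inequality in $\|\cdot\|_A$ splits the error into the ``exact-projection'' part $\|\be\|_A$ and the ``polynomial-approximation'' deviation $\|(\widetilde\pi_f - \pi_f)\pi_D \bu\|_A$.

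For the first part I would directly apply Theorem~\ref{theorem: energy upscaling error estimate}, which yields $\|\be\|_A \le \eta_w \|D^{-1/2}A\bu\|$ and is independent of which polynomial is used to construct $\widetilde\pi_f$. For the second part, I would simply cite the CG error bound already established in the displayed computation immediately above the theorem statement, namely $\|(\widetilde\pi_f - \pi_f)\pi_D \bu\|_A \le 2q^\nu \eta_w \|\bu\|_A$, whose derivation combined the CG convergence estimate $\|\overline\bu_c^\nu - \overline\bu_c\|_{A_f} \le 2q^\nu \|\overline\bu_c\|_{A_f}$ with the same spectral identity $\|A^{1/2}P_\perp A_f^{-1/2}\| = 1$ used in~\eqref{deviation identity}, followed by the Kato-type bound $\|\pi_D\|_A \le \eta_w$ from~\eqref{Kato}. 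Adding the two contributions gives~\eqref{ine:CG-error}.

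There is really no serious obstacle here: the structural work (the A-orthogonal decomposition, the best-approximation property of the Galerkin solution, the reduction of $\|(\widetilde\pi_f - \pi_f)\pi_D \bu\|_A$ to $\|p_\nu(A_f)\|$, and the well-conditioning of $A_f$) has all been done previously. The only delicate point worth double-checking is the constant in front of $q^\nu$: one must be careful that the CG iterate is taken with zero initial guess (so that the polynomial $p_\nu$ of the residual satisfies $p_\nu(0)=1$, giving the standard factor $2q^\nu$ in the $A_f$-norm of the error) and that pushing through the $P_\perp$ and $A^{1/2}$ factors does not introduce a further multiplicative constant — which it does not, thanks again to $\|A^{1/2}P_\perp A_f^{-1/2}\|=1$.
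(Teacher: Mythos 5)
Your proposal is correct and follows essentially the same route as the paper: the paper proves Theorem~\ref{thm:CG-error} exactly by reusing the argument of Theorem~\ref{theorem: energy error estimates} (best approximation in $\Range(\widetilde P)$, the perturbed decomposition~\eqref{approx-decomp}, and Theorem~\ref{theorem: energy upscaling error estimate} for the first term), with the SA-polynomial bound of Lemma~\ref{lem:small-3rd-term} replaced by the CG deviation estimate $\|({\widetilde \pi}_f - \pi_f)\pi_D\bu\|_A \le 2q^\nu\eta_w\|\bu\|_A$ derived in the displayed computation immediately preceding the theorem. Your attention to the zero initial guess for CG and to the identity $\|A^{\frac12}P_\perp A_f^{-\frac12}\|=1$ matches the paper's treatment.
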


\begin{remark} \label{rem:CG-best}
The error estimates~\eqref{geometric error decay} and~\eqref{ine:CG-error} both have perturbation terms that decay geometrically with the same rate $q$, therefore, we can conclude that modifying the coarse space based on CG polynomial gives better estimates than the SA polynomial.  Note that the CG approximation also, as in the SA case,  does not need estimates for the spectrum of $A_f$, whereas these are needed in the Chebyshev polynomial case.
\end{remark}

\subsection{Example: Linear Finite Elements for Laplace Equation}
As a simple example, we consider the Laplace equation, $-\Delta u = f$, discretized using piecewise linear finite elements.  
In this case, we have $\eta_w \simeq \frac{H}{h}$ (cf., \cite{brezina_vassilevski:2011}) where $H$ stands for the diameter of the aggregates.  This fact, combined with a simple argument relating the right hand side of the discrete problem, $\bbf$,  and the $L_2$-norm $\|f\|_0$ of the right hand side function $f$ (as shown in \cite{vassilevski:2011}),  
we conclude that the first term $\eta_ w \|D^{-\frac{1}{2}}A \bu\| \simeq H\| f \|_0$. If we want to balance the second term with the first one, we need to choose 
$2q^\nu \simeq H$ (assume Chebyshev polynomial or CG used). This implies that 
\begin{equation*}
\nu\log \left (1 + \frac{2}{\eta_w-1} \right ) \simeq \log \frac{1}{H},
\end{equation*}
and since $\log \left (1 + \frac{2}{\eta_w-1} \right ) \simeq \frac{2}{\eta_w-1} \simeq \frac{h}{H}$, we have the following estimate for the polynomial degree (or the number of iterations used for CG)
\begin{equation*}
\nu \simeq \frac{H}{h} \log \frac{1}{H}.
\end{equation*}
This ensures the  error estimate,
\begin{equation*}
\|\bu - {\widetilde P}  {\widetilde \bu}_c \|_A \le  C \left (H\| f\|_0 + H \|\bu\|_A \right ).
\end{equation*}
Similar argument can also be applied to the SA polynomial case in order to get an estimate of the polynomial degree.

\section{Remarks for Elliptic Problems with High Contrast Coefficients}\label{section: some remarks}
We consider the case with exact projection $\pi_f$ for simplicity in this section.  In section~\ref{section: modified coarse space}, we showed that the second component of the two-level $A$-orthogonal decomposition 
\begin{equation*}
\bu = (I-\pi_D)\bv + (I-\pi_f)\pi_D \bu,
\end{equation*}
is actually the solution ${\overline \bu}_c$ of the modified coarse problem~\eqref{upscaled problem}. 
It is worth noticing the the first component above, $(I-\pi_D) \bv$, is the $A$-orthogonal projection of $\bu$ onto the space $\Range(I-\pi_D)$. We already discussed the fact that the matrix of this problem is sparse and well-conditioned (after symmetric diagonal scaling of $A$). Thus it is computationally feasible to explicitly compute this component as well. 
Of course, this is not surprising since a two-grid AMG with the standard coarse space $\Range(P)$ and using $D$ as a smoother is uniformly convergent, hence $\bu$ can be approximated well by a few V-cycles. Note that such an AMG uses only sparse matrix-operations with much sparser matrices than the one of the upscaled problem~\eqref{upscaled problem} and~$A_f$.  Therefore, introducing the modified coarse space  $\Range((I-\pi_f)\pi_D)$ and the resulting error estimate  \eqref{upscaling error estimate in energy} (and its corollaries) are mostly of theoretical value. In the case of approximate projections,  if we cannot control the sparsity of the coarse matrices so that the resulting method requires much less memory and computational cost than the original matrix $A$, then the upscaled problem is  mostly of theoretical value only. 
With our numerical tests we demonstrate that in the PDE case, careful choice of the polynomial degree can lead to some savings in practice for the upscaled problems. The situation for graph Laplacian matrices is more challenging for graphs with irregular degree
distribution. 

One possible practical application of the presented method is the diffusion equation,
\begin{equation} \label{eqn:jump}
\begin{cases}
&-\div (\kappa \nabla u) = f, \quad \text{in} \, \Omega, \\ 
&u = 0, \quad \text{on}\, \partial \Omega,
\end{cases}
\end{equation}
where~$\Omega \subset {\mathbb R}^d$, $d=2$ or $d=3$, is a polygonal/polyhedral domain.  Using $H^1$-conforming finite element space on a quasiuniform mesh $\T_h$, we end up with linear system of the form
\begin{equation*}
A \bu = \bbf.
\end{equation*}
By construction, we have $\|\bbf\| \simeq h^{\frac{d}{2}} \|  f\|_0$. Let $D$ be the diagonal of $A$. We have 
$D \simeq \diag(h^{d-2} \kappa_i)$, where $\kappa_i h^d$ are the diagonal entries of the weighted mass matrix corresponding to the $\kappa$-weighted $L_2$-bilinear form. Hence, we have the estimate 
\begin{equation*}
\|D^{-\frac{1}{2}} \bbf\| \le \eta_b\; h\| f\|_{0,\; \kappa^{-1}}
\end{equation*}
for a uniform constant $\eta_b$, which leads to the error estimate
\begin{equation*}
\|u_h-u_H\|_{1,\;k} \le \eta_w \eta_b\; h\| f\|_{0,\;\kappa^{-1}}.
\end{equation*}
Here $u_h$ is the finite element solution of the fine-grid problem and $u_H$ is the finite element solution corresponding to the upscaled solution~${\overline \bu}_c$ of~\eqref{upscaled problem}.  Note that, this error estimate is independent of the coefficient $\kappa$ with the expense of the weighted norms involved.  For $\kappa \simeq 1$, using the fact that $\eta_w \simeq H/h$, the last error estimate reads 
$\|u_h-u_H\|_1 \le CH \|f\|_0$ which is an analog to the one in \cite{MPe14}.

\section{Numerical Experiments}\label{section: numerical results}
In this section, we present numerical results illustrating the theory demonstrating the approximation properties of the modified coarse spaces. In all experiments, we use the AMGe method in the form proposed in~\cite{brezina_vassilevski:2011,vassilevski:2011} to construct the original coarse space~$\Range(P)$ so that it satisfies the WAP.  More presicely, we use a  greedy type algorithm to construct a set of aggregates and solve a generalized eigenvalue problem (see (13) in~\cite{brezina_vassilevski:2011}) to construct the tentative prolongation as shown in~\eqref{def:tent-P}. To assess the quality of the proposed approach in practice, we only consider two-grid method and the modified coarse space based on the polynomial approximations as discussed in Section~\ref{section: modified coarse space with approximate projection}. In fact, we use the CG polynomial in all our experiments as it gives the best error estimates (see Remark~\ref{rem:CG-best}).  
The tests are run in Matlab using an AMG package developed by the authors. 

\begin{example} \label{exp:simple-poisson}
Consider the diffusion problem~\eqref{eqn:jump} posed on $\Omega = [0,1] \times [0,1]$ with 
\begin{equation*}
\kappa  =
\begin{cases}
\epsilon, \quad  \text{\rm in } [0.25, 0.5] \times [0.25, 0.5] \cap [0.5, 0.75] \times [0.5, 0.75] \\
1, \quad \text{\rm otherwise}. 
\end{cases}
\end{equation*}
\end{example}

Our first example is diffusion problem~\eqref{eqn:jump} with discontinuous coefficient.  As discussed in Section~\ref{section: some remarks}, the modified coarse space provides error estimates that are independent of the jumps.  The results shown in Figure~\ref{fig:jump-independent} support this theoretical results. Here, the fine level problems are all of size $4,225 \times 4,225$ on a uniform triangular mesh with $h=\frac{1}{64}$ and the coarse level matrices are all of size $302 \times 302$.  We change the contrast of the diffusion coefficient, i.e., $\epsilon$, and report how the SAP constant $\eta_s$ changes with respect to the degree~$\nu$ of the polynomial (since we use CG, the degree is equivalent to the number of iterations). For comparison, we also report the SAP constants when we modify the coarse space exactly by directly inverting $A_f$. As clearly seen, the SAP constant
stays almost the same for different choices of~$\epsilon$ for a fixed~$\nu$, and is indeed practically independent of the contrast~$\epsilon$.  This is consistent with the theory and shows that the modified coarse spaces provide approximations in the energy norm that are robust with respect to the jumps.  From Figure~\ref{fig:jump-independent},  we also observe that the SAP constant decreases to the SAP constant that corresponds to the modified coarse space with exact inverse, when $\nu$ increases with a rate that is almost the same for different~$\epsilon$.  This is also consistent with the theoretical results presented in Section~\ref{section: modified coarse space with approximate projection}; namely, that the decay rate should depend on $\eta_w$ which, in fact, in the present case depends on $\frac{H}{h}$. 
 Our next numerical experiment further verifies this property; see the results shown in Figure~\ref{fig:jump-ratio}.  
Since $h$ is $N^{-1/2}$ and $H$ is roughly $N_c^{-1/2}$, we present the results in terms of the ratio $\frac{N}{N_c}$, which is roughly $\left(\frac{H}{h}\right)^2$.  
More specifically, from Figure~\ref{fig:jump-ratio}, we see that the SAP constant decreases when $\nu$ increases and the bigger the ratio $\frac{N}{N_c}$ is, the slower the decay rate is.  But, the SAP constant converges to the SAP constant corresponding to the modified coarse space with exact $A_f$  inverse,  as expected.

\begin{figure}[htbp]
\begin{center}
\includegraphics[width=0.9\textwidth]{./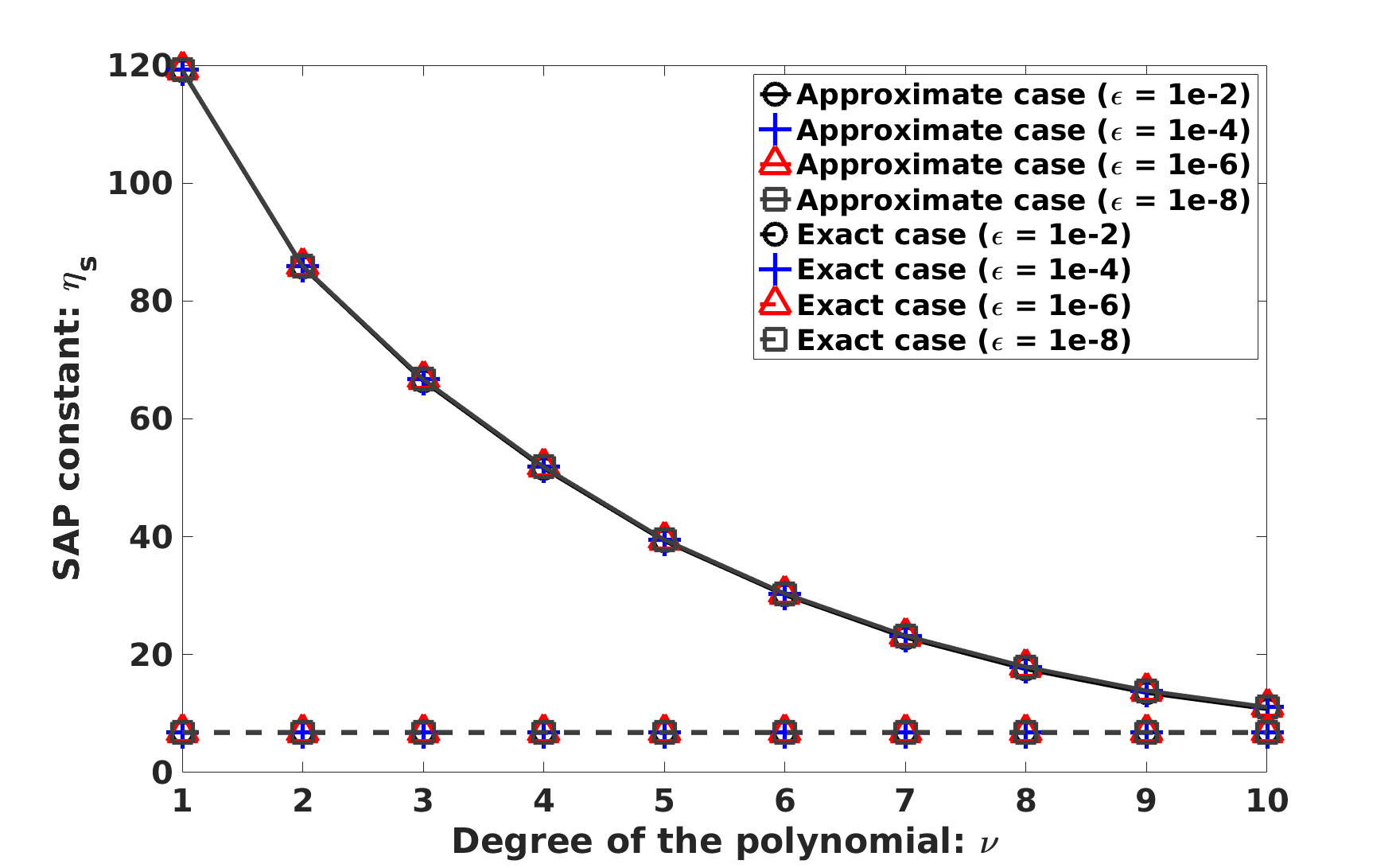}
\caption{Example~\ref{exp:simple-poisson}: the SAP constants for different~$\epsilon$ ($h=1/64$, $N = 4,225$ and $N_c = 302$)}
\label{fig:jump-independent}
\end{center}
\end{figure}

\begin{figure}[htbp]
\begin{center}
\includegraphics[width=0.9\textwidth]{./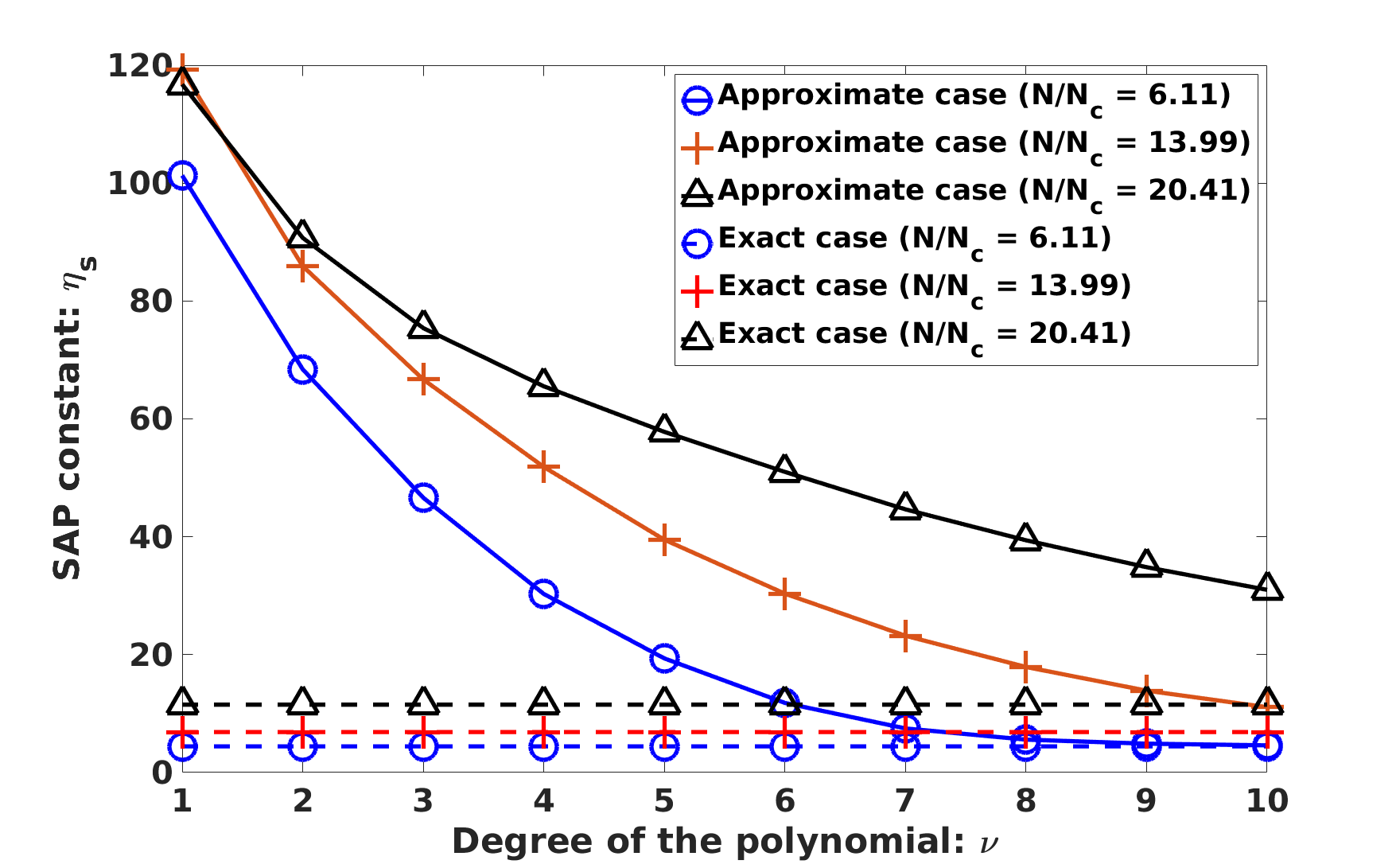}
\caption{Example~\ref{exp:simple-poisson}: the SAP constants for different~$N_c$ ($h=1/64$, $N = 4,225$ and $\epsilon = 10^{-4}$)}
\label{fig:jump-ratio}
\end{center}
\end{figure}

The next test illustrates the properties of the coarse matrices corresponding to the modified coarse spaces  based on polynomial approximation. More specifically, we are interested in the sparsity of the modified prolongation matrix~${\widetilde P}$ (in terms of percentage w.r.t to the matrix size $NN_c$). We also are interested in the AMG operator complexity (OC) defined as the ratio between the total number of nonzeros of $A$  plus the number of the nonzeros of the coarse-level matrix and the number of nonzeros of $A$.
Note that~$\nu = 0$ corresponds to the original prolongation~$P$ (and respective coarse matrix).  From Table~\ref{tab:jump-sparse},  as expected, we see that both the number of nonzeros and operator complexity grow when $\nu$ increases.  The number of nonzeros of ${\widetilde P}$ grows faster when the ratio $\frac{N}{N_c}$ gets bigger whereas the operator complexity actually grows slower when $\frac{N}{N_c}$ gets larger. We note that in practice, for upscaling purposes, we need to have operator complexity less than two (then we use less memory to store the coarse matrix than the original fine-level one).
Our results indicate that to achieve desired approximation accuracy for a reasonable computational cost can be  a challenging task. In addition, we also use the modified coarse space in AMG iterative method and report number of iterations of the two-grid algorithms. Here, we choose $f=1$ in the diffusion problem~\eqref{eqn:jump}. In the two-grid algorithm, Gauss-Seidel relaxation is used, with zero initial guess and the stopping criterion is achieving a reduction of the $\ell_2$ norm of the relative residual by $10^{-6}$. As expected, the number of iterations (Iter) decreases as $\nu$ increases.  We note that in practice for solving linear systems,  we need to consider the trade-off between the computational complexity and convergence behavior. The latter can also be a challenge in practice.
 

\begin{table}[htp]
	\caption{Example~\ref{exp:simple-poisson}: sparsity of the modified coarse space and performance of two-grid AMG method with different $\nu$ ($h=1/64$, $N = 4,225$ and $\epsilon = 10^{-4}$)}
	\begin{center}
		{\footnotesize
			\begin{tabular}{|c||c|c|c|c|c|c|c|c|c|}
				\hline \hline 
				&\multicolumn{3}{|c|}{$N/N_c = 6.11$} & \multicolumn{3}{|c|}{$N/N_c = 13.99$}& \multicolumn{3}{|c|}{$N/N_c = 20.41$} \\ \hline
				&  nnz of ${\widetilde P}$  & OC & Iter &  nnz of ${\widetilde P}$ & OC & Iter &  nnz of ${\widetilde P}$  & OC & Iter \\ \hline
				$\nu = 0$ & $0.15\%$  & $1.22$ &$43$& $0.43\%$  & $1.13$  &$52$& $1.2\%$   & $1.15$ &$60$  \\
				$\nu = 1$ & $0.92\%$  & $2.19$ &$30$& $2.75\%$  & $1.63$  &$42$& $6.97\%$  & $1.65$ &$55$ \\
				$\nu = 2$ & $2.52\%$  & $3.92$ &$23$& $7.27\%$  & $2.41$  &$38$& $17.55\%$ & $2.24$ &$50$ \\
				$\nu = 3$ & $4.92\%$  & $6.62$ &$19$& $13.78\%$ & $3.29$  &$34$& $31.35\%$ & $2.73$ &$48$ \\
				$\nu = 4$ & $8.14\%$  & $8.89$ &$16$& $21.87\%$ & $4.10$  &$30$& $45.87\%$ & $2.98$ &$46$ \\
				$\nu = 5$ & $12.07\%$ & $11.71$&$14$& $30.96\%$ & $4.74$  &$28$& $60.04\%$ & $3.10$ &$41$ \\
				\hline \hline
			\end{tabular}
		}
	\end{center}
	\label{tab:jump-sparse}
\end{table}%

\begin{example}\label{exp:graph-laplacian}
To stress upon the fact that our approach is in fact purely algebraic, we apply our results to graph Laplacian systems corresponding to graphs listed in Table~\ref{tag:graph-laplacian}.
\begin{table}[htp]
	\caption{A set of networks from different real-world applications (first three graphs are from Stanford Large Network Dataset Collection~\cite{SNAP} and the last graph is from SuiteSparse Matrix Collection~\cite{SuiteSparse}). For each graph, we show its number of vertices, number of edges, average vertex degree (ave. deg.) and maximal vertex degree (max. deg.)}\label{tab:snapnets}
	\begin{center}
	{\footnotesize
		\begin{tabular}{|c||c|c|c|c|l|} 
			\hline \hline
			&   Vertices   &  Edges    &  ave. deg. & max. deg. &  \qquad \quad Description \\ \hline \hline
bitcoin-alpha     &  3,775  &   14,120  &  7.48  & 510  & {\tiny Bitcoin Alpha web of trust network}  \\
ego-facebook  & 4,039  &   88,234 &  43.69  & 1045  & {\tiny Social circles from Facebook} \\
ca-GrQc  	  &  4,158  &   13,425  &  6.46   & 81 & {\tiny Collaboration network of Arxiv} \\
rw5151      &  5,151 &  15,248  &  5.92  &  7  &  {\tiny  Markov chain modeling}\\
			\hline \hline
		\end{tabular}
		}
	\end{center}
\label{tag:graph-laplacian}
\end{table}%
\end{example}

\begin{figure}[htbp]
\begin{center}
\includegraphics[width=0.9\textwidth]{./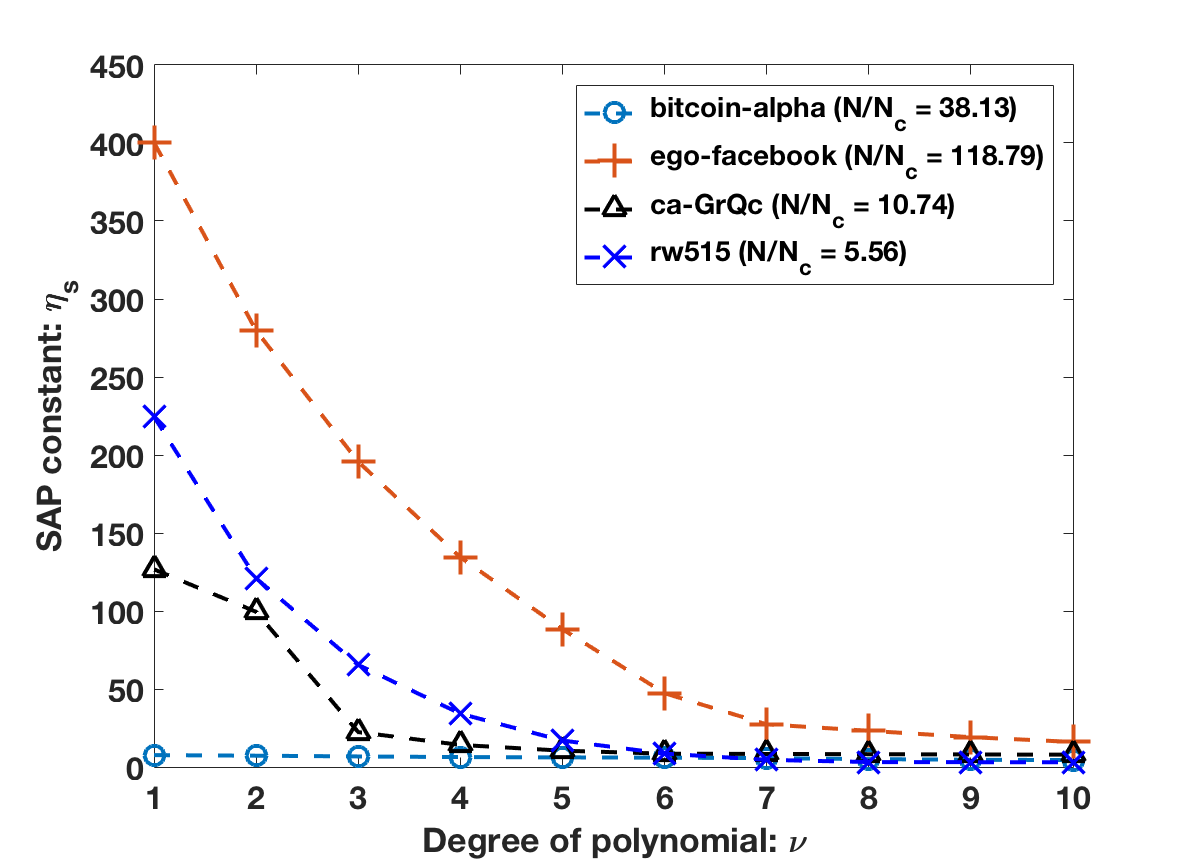}
\caption{Example~\ref{exp:graph-laplacian}: the SAP constants for different~$\nu$}
\label{fig:graph-agg2}
\end{center}
\end{figure}

In Figure~\ref{fig:graph-agg2}, we present the SAP constants for the different graphs from Table~\ref{tag:graph-laplacian}.  Here, we use a simple  unsmoothed aggregation approach. In order to achieve aggressive coarsening, the aggregates are built based on the sparsity pattern of $L^2$, where $L$ corresponds to the graph Laplacian. The original coarse space (or respective interpolation matrix $P$) is constructed using the spectral AMGe method (as used in \cite{hu vassilevski and xu}). As we can see, although the ratio $\frac{N}{N_c}$ differs for the different graphs, if we use relatively accurate approximation (i.e. relatively large $\nu$), the SAP constant stays  small and is fairly similar for different graphs.  This demonstrate that the modified coarse spaces are also robust for these real-world graphs.  

In Figure~\ref{fig:graph-nnzP-agg2} and~\ref{fig:graph-OC-agg2}, we illustrate the sparsity of the modified prolongations and respective coarse matrices.  We notice that the nonzeros percentage of ${\widetilde P}$ grows fairly quickly, which suggests that in practice, only small $\nu$ makes sense. If the coarse level problem are meant to be used multiple times, due to reasonable operator complexity and good approximation property achieved by large~$\nu$, we could use more accurate approximated modified coarse spaces coming from relatively large $\nu$.  For graphs with irregular degree distribution, the challenge to maintain reasonable sparsity of  the coarse matrices with good approximation properties is much more pronounced than in the discretized PDE case and it requires more specialized study.

\begin{figure}[htbp]
\begin{center}
\includegraphics[width=0.9\textwidth]{./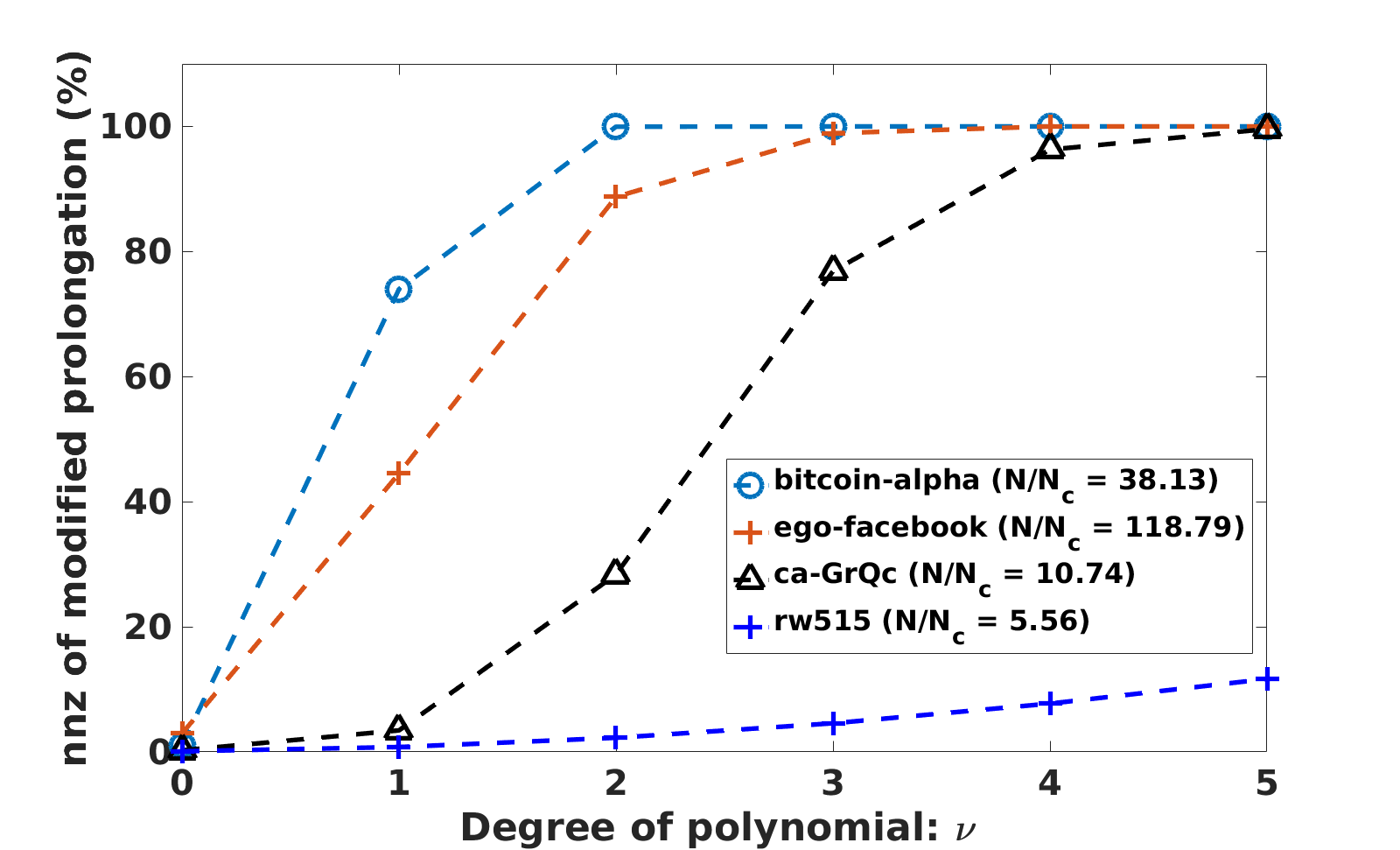}
\caption{Example~\ref{exp:graph-laplacian}: number of nonzeros of ${\widetilde P}$ (in percentage) for different~$\nu$}
\label{fig:graph-nnzP-agg2}
\end{center}
\end{figure}

\begin{figure}[htbp]
\begin{center}
\includegraphics[width=0.9\textwidth]{./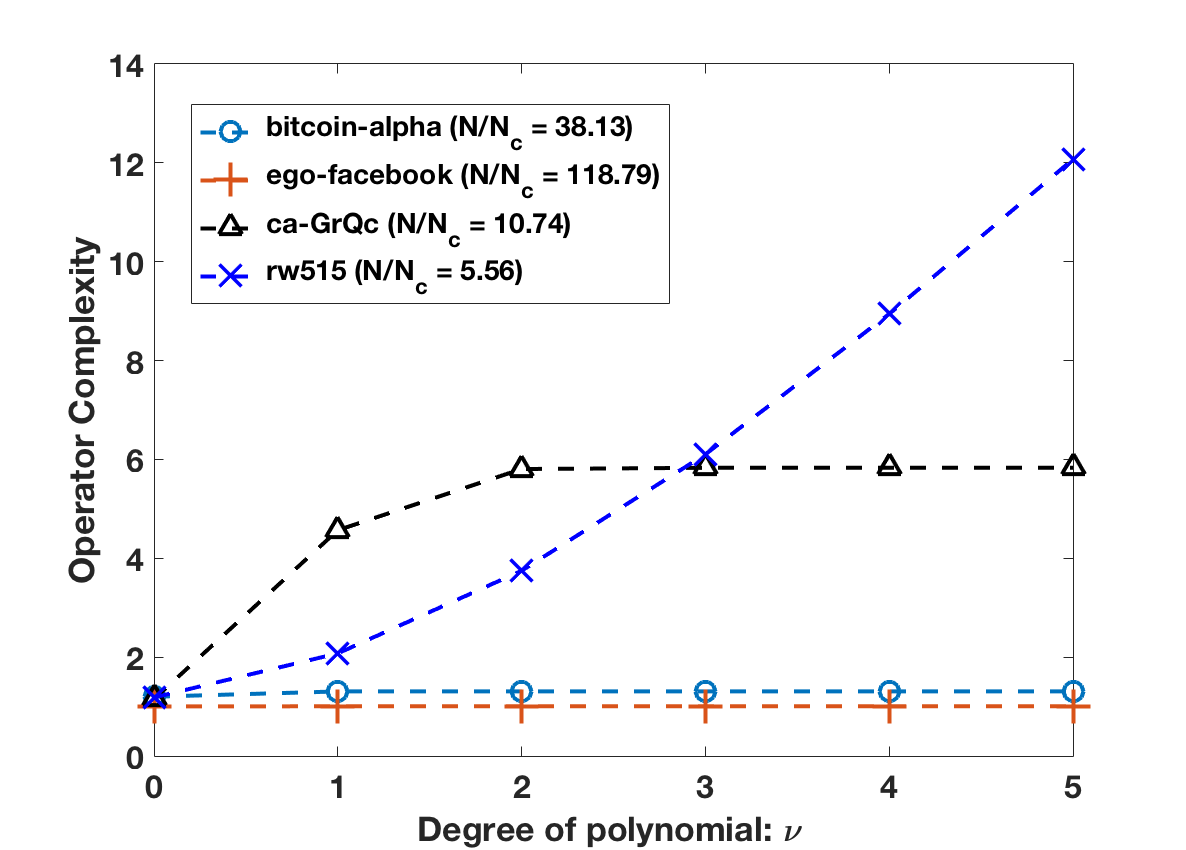}
\caption{Example~\ref{exp:graph-laplacian}: operator complexity for different~$\nu$}
\label{fig:graph-OC-agg2}
\end{center}
\end{figure}

\section{Conclusions}\label{section: conclusions}
In this paper, we investigate the use of certain AMG coarse spaces for the purpose of dimension reduction which in the present setting is referred to as numerical upscaling.  As it is well-understood that although the traditional AMG coarse spaces do  satisfy the WAP (weak approximation property), it is not sufficient for the purpose of upscaling because  the coarse-level solutions do not necessarily approximate the fine-level solution with guaranteed accuracy.  
To remedy this, we follow the approach developed in~\cite{MPe14} extending it to the presented AMG setting. 
The method exploits a projection $\pi_f$ used to modify the original coarse space, which is assumed to possess a WAP, 
so that the resulting new, modified, coarse space satisfies a SAP (strong approximation property) with provable satisfactory bound on the resulting constant $\eta_s$.  More specifically, the  modified coarse space is one of the components in a two-level $A$-orthogonal decomposition so that the corresponding coarse-level solution gives accurate approximation in energy norm.  One main challenge with this approach is the fact that 
the matrix $A_f^{-1}$ used in the definition of $\pi_f$, is dense even if $A_f$ is sparse. Thus, modifying the original  coarse space with exact $\pi_f$ is computationally infeasible (for large-scale problems).  In order to make such modification more practical, we use the fact (which we prove) that $A_f$ is well-conditioned, allowing the use of polynomials to approximate its inverse, leading to an approximate  $\pi_f$, which is used to define an approximate modified coarse space.  Such approximation is computational feasible and also provides provable error estimates in energy norm.  Moreover, the error estimates improve when increasing the degree of the polynomial used in the approximation.  

We provide numerical results that illustrate the theory and demonstrate the accuracy and sparsity of the coarse problems coming from the approximately modified coarse space.  The tests include both, examples of diffusion equation with high contrast coefficients as well as graph Laplacian matrices corresponding to some real-life applications.

As discussed, the use of such modified coarse spaces is of interest in dimension reduction which, as our model tests demonstrate, can be challenging for the present approach (in terms of maintaining reasonable sparsity of the coarse matrices). In the PDE case this challenge seems resolvable if large enough coarsening factor ($H/h$) is employed, whereas in the graph application for graphs with irregular degree distribution, in addition to high coarsening factor one may need to employ graph disaggregation (cf., \cite{disaggregation paper}), which is left for a possible future study.
Additionally, in the PDE case, it is of interest to extend the present results to other types of PDEs such as 
ones posed in  $H(\curl)$ and  $H(\div)$, which will provide alternatives to the existing AMGe upscaling methods (cf., \cite{improved de Rham upscaling}, \cite{Kalchev et al. 2016}, and \cite{BLV}).


\bibliographystyle{siamplain}

\begin{thebibliography}{99}
\expandafter\ifx\csname url\endcsname\relax
  \def\url#1{\texttt{#1}}\fi
\expandafter\ifx\csname urlprefix\endcsname\relax\def\urlprefix{URL }\fi


\bibitem{BLV}
A. Barker, C. S. Lee, and P. S. Vassilevski,
``{\em Spectral Upscaling for Graph Laplacian Problems with Application to Reservoir Simulation,}'' 
SIAM Journal on Scientific Computing  {\bf 39}(5)(2017), pp.~S323-S346.

\bibitem{AMG original}
A. Brandt, S. McCormick, and J. Ruge, 
"{\em Algbraic Multigrid (AMG) for Sparse Matrix Equations,}" in Sparsity and Its Applications, Edited by David J. Evans, Cambridge University Press, Cambridge, 1985, pp. ~257--284. 

\bibitem{AMGe spectral}
T.~Chartier, R.~Falgout, V.E. Henson, J.~Jones,  T. Manteuffel, S. McCormick, J. Ruge,  and P.S. Vassilevski, ``{\em
Spectral AMGe ($\varrho$AMGe),}"  SIAM Journal on Scientific Computing, {\bf 25}(1)(2003), pp.~1-26.


\bibitem{AdaptiveSA}
M.~Brezina, R.~Falgout, S. MacLachlan, T. Manteuffel, S. McCormick, and J. Ruge, ``{\em
Adaptive smoothed aggregation {$(\alpha{\rm SA})$} multigrid,}"  SIAM Rev., {\bf 47}(2)(2005), pp.~317-346.

\bibitem{brezina_vassilevski:2011}
M.~Brezina and P.~Vassilevski, ``{\em Smoothed aggregation spectral element
  agglomeration {AMG}: ${SA}-\rho${AMGe},}" in {Large-{S}cale {S}cientific
  {C}omputing, 8th {I}nternational {C}onference, {L}{S}{S}{C} 2011, {S}ozopol,
  {B}ulgaria, June 6-10th, 2011. Revised Selected Papers}. Lecture Notes in
  Computer Science, vol.~7116, Springer, 2012, pp.~3--15.
  

\bibitem{brezina vanek vassilevski}
M. Brezina, P. Van{\v e}k, and P.~S. Vassilevski,
``{\it An Improved Convergence Analysis of Smoothed Aggregation Algebraic Multigrid,}'' Numerical Linear Algebra with Applications
{\bf 19}(3)(2012), pp.~441-469.
(published online: 2 MAR 2011, DOI: 10.1002/nla.775). 


\bibitem{SuiteSparse}
T.~A. Davis	and Y. Hu,
``{\em The university of Florida sparse matrix collection,}"
ACM Transactions on Mathematical Software, 38(1), 2011, pp.~1-25.

\bibitem{DaV13}
P. D'Ambra and P. S. Vassilevski,
``{\em Adaptive AMG with Coarsening Based on Compatible Weighted Matching,}" Computing and Visualization in Science 16 (2013), pp. 59-76.


\bibitem{demko et al}
S. Demko, W.F. Moss, and P.W. Smith, "{\em Decay rates of inverses of band matrices}", Mathematics of Computation {\bf 43}(168)(1984), pp.~491-499.

\bibitem{FVZ05}
R. Falgout, P.~S. Vassilevski, and L.~T. Zikatanov,
``{\em On Two-grid Convergence Estimates,}"
Numerical Linear Algebra with Applications, 12(5-6), 2005, pp. 471-494.

\bibitem{GGR97}
T. Grauschopf, M. Griebel, and H. Regler, 
``{\em Additive multilevel preconditioners based on bilinear interpolation, matrix-dependent geometric coarsening and algebraic multigrid coarsening for second-order elliptic PDEs}," 
Applied Numerical Mathematics, Multilevel Methods 23, 1997, pp. 63–95 

\bibitem{hu vassilevski and xu}
X. Hu,  P.~S. Vassilevski, and J. Xu,
``{\it A two-grid SA-AMG convergence bound that improves when increasing the polynomial degree,}''
Numerical Linear Algebra with Applications {\bf 23}(4)(2016), pp.~746--771.

\bibitem{Kalchev et al. 2016}
D. Kalchev, C. S. Lee, U. Villa, Y. Efendiev, and P. S. Vassilevski,
Upscaling of Mixed Finite Element Discretization Problems by the Spectral AMGe Method, SIAM Journal on Scientific Computing 38(5) (2016), pp. A2912-A2933.


\bibitem{AMS}
T.~V. Kolev and P.~S. Vassilevski, 
 ``{\it Parallel auxiliary space {AMG} for {H(curl)} problems},''
Journal of Computational Mathematics {\bf 27}(2009), pp.~604--623.

\bibitem{ADS}
T.~V. Kolev and P.~S. Vassilevski,
``{\it Parallel auxiliary space {AMG} for {H(div)} problems},''S
SIAM Journal on Scientific Computing {\bf 34}(2012), pp.~A3079-A3098.

\bibitem{disaggregation paper}
V. Kuhlemann and P.~S. Vassilevski,
``{\em Improving the Communication Pattern in Mat-Vec Operations for Large Scale-free Graphs by Disaggregation,}''
SIAM Journal on Scientific Computing {\bf 35}(5)(2013), pp.~S465-S486.


\bibitem{improved de Rham upscaling}
I.~V. Lashuk and P.~S. Vassilevski,
``{\it The Construction of Coarse de Rham Complexes with Improved Approximation Properties,}''
Computational Methods in Applied Mathematics {\bf 14}(2)(2014), pp.~257-303.

\bibitem{LSG09}
J.V. Lent, R. Scheichl, I.G. Graham, 
``{\em Energy-minimizing coarse spaces for two-level Schwarz methods for multiscale PDEs}," 
Numerical Linear Algebra with Applications 16, 2009, pp. 775–799. 

\bibitem{SNAP}
J. Leskovec and A. Krevl,
``{\em {SNAP Datasets}: {Stanford} Large Network Dataset Collection,}
{\url{http://snap.stanford.edu/data}}.

\bibitem{XXX}
Oren E. Livne and Achi Brandt,
``{\em Lean Algebraic Multigrid (LAMG): Fast Graph Laplacian Linear Solver,}''
SIAM Journal on Scientific Computing 34(4) (2012), pp. B499-B522

\bibitem{MM06}
S.P. MacLachlan and J.D. Moulton,  
``{\em Multilevel upscaling through variational coarsening}," 
Water Resources Research 42, 2006.


\bibitem{MPe14}
A. M{a}lqvist and D. Peterseim,
``{\em Localization of elliptic multiscale problems,}"  Math. Comp., {\bf 83}(290), pp.~2583-2603, 2014.


\bibitem{MDH98}
J.D. Moulton, J.E. Dendy, and J.M. Hyman, 
``{\em The Black Box Multigrid Numerical Homogenization Algorithm.}," 
Journal of Computational Physics 142, 1998, pp. 80–108. 


\bibitem{RS-AMG-edge-elements}
S. Reitzinger and J. Sch{\" o}berl, 
``{\em An algebraic multigrid method for finite element discretizations with edge elements,}''
Numerical Linear Algebra with Applications
{\bf 9}(3) (2002), pp. 223-238.


\bibitem{vassilevski review}
P.~S. Vassilevski, ``{\em On two ways of stabilizing the HB multilevel methods,}" SIAM Review 39(1997), 18--53.

\bibitem{SDHNPS14}
N. Spillane, V. Dolean, P. Hauret, F. Nataf, C. Pechstein, and R. Scheichl, 
``{\em Abstract robust coarse spaces for systems of PDEs via generalized eigenproblems in the overlaps}," 
Numer. Math. 126, 2014, pp. 741–770. 


\bibitem{vassilevski:2011}
Panayot S. Vassilevski, ``{\em Coarse Spaces by Algebraic Multigrid: Multigrid Convergence and Upscaling Error Estimates,}''  Advances in Adaptive Data Analysis, {\bf 3}(1\&2), pp.~229-249, 2011.

\bibitem{MLBFP}
Panayot S. Vassilevski, ``{\sc Multilevel Block Factorization Preconditioners,}
Matrix-based Analysis and Algorithms for Solving Finite Element Equations,'' Springer, New York, 2008. 514 p.


\bibitem{Xu-Zikatanov: Acta Numerica}
 Jinchao Xu and Ludmil Zikatanov, 
``{\em Algebraic multigrid methods,}'' Acta Numerica {\bf 26}(2017), pp. 591-721.



   

\end{thebibliography}

\end{document}